\newtheorem{theorem}{Theorem}[section]
\newtheorem{lemma}[theorem]{Lemma}
\theoremstyle{definition}
\newtheorem{example}[theorem]{Example}
\theoremstyle{remark}
\newtheorem{remark}[theorem]{Remark}
\numberwithin{equation}{section}
\theoremstyle{plain}
\newtheorem{proposition}[theorem]{Proposition}
\newcommand{\gp}{\dot{\gamma}}
\begin{document}
\title {Sectional curvature for Riemannian manifolds with density}

\author{William Wylie}
\address{215 Carnegie Building\\
Dept. of Math, Syracuse University\\
Syracuse, NY, 13244.}
\email{wwylie@syr.edu}
\urladdr{https://wwylie.expressions.syr.edu}
\thanks{The author was supported in part by NSF-DMS grant 0905527. }
\keywords{}

\begin{abstract}
In this paper we introduce two new notions of sectional curvature for Riemannian manifolds with density.   Under both notions of curvature  we  classify the constant curvature manifolds.   We  also prove generalizations of  the theorems of Cartan-Hadamard, Synge, and Bonnet-Myers as well as a  generalization of the (non-smooth) 1/4-pinched sphere theorem.  The main idea  is to modify  the radial curvature equation and second variation formula and then apply the techniques of classical Riemannian geometry to these new equations.      \end{abstract}

\maketitle

\section{Introduction}

In this paper we are interested in the geometry of a Riemannian manifold $(M,g)$ with a smooth positive density  function, $e^{-f}$.  A theory of Ricci curvature for these spaces goes back to Lichnerowicz \cite{Lich1, Lich2} and was later developed by Bakry-Emery \cite{BE} and many others. It has turned out to be integral to developments in both  Ricci flow and optimal transport and has thus experienced an explosion of results in the last few years.  We will not try to reference them all  here, see  chapter 18  of \cite{MorganBook} for a partial survey.  A notion of weighted scalar curvature also comes up in Perelman's work \cite{Per} and is related to his  functionals  for the Ricci flow, also see  \cite{Lott2, CGY1, CGY2}.  The weighted Gauss curvature and the weighted Gauss Bonnet theorem in dimension two has also been studied in  \cite{ Cetc, CM}. 

We introduce two new concepts of  sectional curvature for a Riemannian manifold equipped with a smooth vector field $X$.   Given an orthonormal pair of vectors $(V,U)$ we define 
\begin{eqnarray*}
\sec_X^V(U) &=& \sec(V,U) + \frac12 L_X g(V,V) \\
\overline{\sec}_X^V(U) &=& \sec(V,U) + \frac12 L_X g(V,V) + g(X,V)^2 
\end{eqnarray*}
Where $\sec(V,U)$ is the sectional curvature of the plane spanned by $V$ and $U$.  When $X = \nabla f$ is a gradient field we write $\sec_f$ and $\overline{\sec}_f$ respectively.  The asymmetrical placement of $U$ and $V$  emphasizes that   $\mathrm{sec}^V_X(U) \neq \mathrm{sec}^U_X(V)$.   On the other hand,  we will  see below that  in dimension $2$, bounds on $\sec_X$ and $\overline{\sec}_X$ are equivalent to bounds on certain Bakry-Emery Ricci tensors.        Since the Bakry-Emery Ricci tensors generically have distinct eigenvalues in dimension $2$, the lack of symmetry of the weighted sectional curvature is a necessary  feature of any notion of weighted sectional curvature that agrees with the Bakry-Emery Ricci curvature.   We  also show below that $\sec_X$ and $\overline{\sec}_X$ come up naturally in  at least three places: the radial curvature equation, the second variation of energy formula, and formulae for Killing fields.   We will discuss our motivation for considering these notions from the radial curvature equation in section two.  

We use these equations to show that some of the fundamental comparison results about sectional curvature bounds extend to $\sec_X$ and $\overline{\sec}_X$.    We  define the condition $\mathrm{sec}_X = \psi$ where $\psi$ is a real valued function on $M$  to mean that  $\sec^V_X(U) = \psi(p) $ for all $p \in M$ and for all orthonormal pairs of $(V,U)$ in $T_pM$.   We can define the conditions  $\overline{\sec}_X = \psi$ or  $\sec_X \geq (\leq ) \psi$, etc. similarly.     

The most fundamental fact  about sectional curvature is that constant curvature characterizes the classical Euclidean, spherical, and hyperbolic geometries.  Constant weighted sectional curvature also characterizes natural vector fields or functions on  spaces of constant curvature. 

 \begin{proposition}[Constant Curvature] \label{ConsCurvIntro}
Let $(M^n,g)$  be a Riemannian manifold of dimension $n>2$, let $X$ be a smooth vector field and $f$ be a smooth function  on $M$, then 
\begin{enumerate}
 \item $\sec_X = \psi$   if and only if   $g$ has constant curvature and $X$ is a conformal field on $(M,g)$. 
 \item $\overline{\mathrm{sec}}_f = \psi$ if and only if   both $g$ and $e^{-2f} g$  have constant curvature. 
 \end{enumerate}
 \end{proposition}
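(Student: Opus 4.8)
The plan is to decode both curvature conditions into pointwise algebraic statements and then feed them into Schur's theorem. First I would rewrite the weight terms using $\tfrac12 L_X g(V,V) = g(\nabla_V X, V)$ and, when $X=\nabla f$, $\operatorname{Hess} f(V,V) = g(\nabla_V \nabla f, V)$ together with $g(\nabla f, V)^2 = (df(V))^2$, so that the definitions become
\begin{eqnarray*}
\sec_X^V(U) &=& \sec(V,U) + g(\nabla_V X, V),\\
\overline{\sec}_f^V(U) &=& \sec(V,U) + \operatorname{Hess} f(V,V) + (df(V))^2.
\end{eqnarray*}
The crucial observation is that in each case the added term depends only on $V$ and the point $p$, not on $U$. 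Hence the hypothesis $\sec_X=\psi$ (resp. $\overline{\sec}_f=\psi$) forces, for each fixed unit $V$, the quantity $\sec(V,U)$ to be independent of the unit vector $U\perp V$.

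Second, I would upgrade this to isotropy. Writing $k(V):=\sec(V,U)$, which is well defined by the previous step, symmetry of the sectional curvature gives $k(V)=k(U)$ for any orthogonal pair, and in dimension $n>2$ any two unit vectors admit a common orthogonal unit vector, so $k$ is constant at each point. Thus $g$ has pointwise constant curvature, and Schur's theorem (this is where $n>2$ is essential) makes the curvature a global constant $\kappa$. This reduction is shared by both parts.

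Third, for part (1): with $g$ of constant curvature $\kappa$, the hypothesis becomes $g(\nabla_V X, V)=\psi-\kappa$ for every unit $V$; since a symmetric $2$-tensor that is constant on all unit vectors is a multiple of the metric, this says the symmetric part of $\nabla X$ equals $(\psi-\kappa)g$, i.e. $\tfrac12 L_X g=(\psi-\kappa)g$, which is precisely the conformal Killing equation. The converse is an immediate substitution. For part (2), the analogous reduction gives $(\operatorname{Hess} f + df\otimes df)(V,V)=\psi-\kappa$ for all unit $V$, hence $\operatorname{Hess} f + df\otimes df = \phi\,g$ for a function $\phi$.

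Finally I would connect this tensor equation to the conformal metric $\tilde g=e^{-2f}g$, and I expect the main work to lie here. Applying the standard transformation law for the curvature under $\tilde g=e^{-2f}g$ expresses $\widetilde{\sec}(\sigma)$ in terms of $\sec(\sigma)$, the values of $\operatorname{Hess} f + df\otimes df$ on an orthonormal basis of $\sigma$, and $|\nabla f|^2$. When $\operatorname{Hess} f + df\otimes df=\phi g$ the plane-dependence cancels, so $\tilde g$ is pointwise isotropic and, by Schur again, of constant curvature; conversely, if both $g$ and $\tilde g$ have constant curvature, the transformation law forces the two-plane trace of $\operatorname{Hess} f + df\otimes df$ to be independent of the plane, and the short linear-algebra fact that a symmetric form with constant two-plane trace is a multiple of $g$ (valid for $n>2$) recovers $\operatorname{Hess} f + df\otimes df=\phi g$ and hence $\overline{\sec}_f=\psi$. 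The delicate points are getting the signs and the $|\nabla f|^2$ term in the conformal formula exactly right and checking that Schur is legitimately invoked for the conformally changed metric.
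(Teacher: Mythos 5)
Your proposal is correct, and part (1) is essentially the paper's own argument: both proofs play the asymmetric weight term off the symmetry $\sec(U,V)=\sec(V,U)$ and finish with Schur's lemma; the only difference is ordering (the paper extracts conformality of $X$ first and then constant curvature, you do the reverse), which is immaterial. For part (2), however, your route is genuinely different in organization. The paper does not substitute into the conformal transformation law at this point: it first establishes the involution identity $\left(\overline{R}^g\right)_f^U(V,V) = e^{2f}\left(\overline{R}^h\right)_{-f}^V(U,U)$ for $h=e^{-2f}g$ (Proposition \ref{Conformal}), so that the hypothesis $\overline{\sec}_f=\psi$ transfers to the pair $(h,-f)$ with a rescaled right-hand side, and then it simply reapplies its constant-curvature lemma (Lemma \ref{ConstantB}) to $h$; no second curvature computation is needed. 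You instead plug $\mathrm{Hess}\, f+df\otimes df=\phi g$ into the conformal curvature formula, observe that the plane dependence cancels, and invoke Schur for $h$ directly; and in the converse you make explicit the linear-algebra step (a symmetric tensor whose two-plane traces $T(U,U)+T(V,V)$ are plane-independent must be a multiple of $g$ when $n>2$) that the paper compresses into the sentence that the conformal change equation ``shows that $\mathrm{Hess}\, u$ is a function times the metric.'' The trade-off: the paper's packaging exhibits the duality $(g,f)\mapsto(e^{-2f}g,-f)$ as a reusable structural fact and avoids redoing any computation for $h$, while your version is self-contained, fills in the eigenvalue argument the paper leaves implicit, and makes visible exactly where the $|\nabla f|^2$ terms cancel. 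Both rest ultimately on the same conformal transformation formula, and both are valid.
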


When the weighted sectional curvatures are not constant, we think of $\overline{\sec}_X$ or $\sec_f$ as measuring how far away the space is from one of the canonical spaces in Proposition \ref{ConsCurvIntro}.    First we generalize  the Cartan-Hadamard theorem to the case where  $\overline{\sec}_X \leq 0$. 
 
 \begin{theorem}[Weighted Cartan-Hadamard Theorem] If  a complete  Riemannian manifold admits a smooth vector field  $X$  such that $ \overline{\sec}_X \leq 0$,
 then $M$ does not have any conjugate points.  In particular,  if $M$ is simply connected then it is diffeomorphic to $\mathbb{R}^n$. 
 \end{theorem}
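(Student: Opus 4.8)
The plan is to reduce the statement to the fact that no geodesic of $M$ carries a pair of conjugate points, and then to invoke the classical Cartan--Hadamard argument: absence of conjugate points makes $\exp_p\colon T_pM\to M$ a local diffeomorphism for every $p$, the pullback metric $\exp_p^\ast g$ is complete by Hopf--Rinow (all of its radial geodesics from the origin extend for all time, since they map to the complete geodesics of $M$), a local isometry out of a complete manifold is a Riemannian covering, and simple connectivity then upgrades this covering to a diffeomorphism onto $\mathbb{R}^n$. So the genuine work is in showing that no geodesic has conjugate points, and for this I would modify the index form exactly as the introduction advertises.

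Fix a unit-speed geodesic $\gamma$ and set $\lambda(t)=g(X,\gp)$. Since $\nabla_{\gp}\gp=0$ we have $\lambda'(t)=g(\nabla_{\gp}X,\gp)=\tfrac12 L_Xg(\gp,\gp)$, so that for every unit $U\perp\gp$ the hypothesis reads
\begin{equation*}
\sec(\gp,U)=\overline{\sec}_X^{\gp}(U)-\lambda'(t)-\lambda(t)^2\le -\lambda'(t)-\lambda(t)^2.
\end{equation*}
Suppose, for contradiction, that $\gamma(\ell)$ is conjugate to $\gamma(0)$, and let $J\not\equiv 0$ be a Jacobi field with $J(0)=J(\ell)=0$. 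Such a $J$ is everywhere orthogonal to $\gp$, and the index form vanishes on it, $I(J,J)=\int_0^\ell\big(|J'|^2-\langle R(J,\gp)\gp,J\rangle\big)\,dt=0$. I would now substitute $J=uW$, where $u(t)=\exp\!\big(\int_0^t\lambda\big)$ is the integrating factor built from the radial component of $X$; since $u>0$ and $u'=\lambda u$, the conditions $J(0)=J(\ell)=0$ force $W(0)=W(\ell)=0$, and $W\perp\gp$.

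Expanding $|J'|^2=u^2\big(|W'|^2+\lambda^2|W|^2+\lambda\tfrac{d}{dt}|W|^2\big)$ and integrating the cross term $\int u^2\lambda\tfrac{d}{dt}|W|^2$ by parts (the boundary terms die because $W$ vanishes at the endpoints, and $\tfrac{d}{dt}(u^2\lambda)=u^2(2\lambda^2+\lambda')$), the weight terms collapse and the index form becomes
\begin{equation*}
I(J,J)=\int_0^\ell u^2\Big(|W'|^2-\big(\sec(\gp,\tfrac{W}{|W|})+\lambda'+\lambda^2\big)|W|^2\Big)\,dt=\int_0^\ell u^2\Big(|W'|^2-\overline{\sec}_X^{\gp}\big(\tfrac{W}{|W|}\big)|W|^2\Big)\,dt.
\end{equation*}
The factor $\lambda^2=g(X,\gp)^2$ is exactly the residue left behind by the integration by parts, so it is precisely the extra $g(X,V)^2$ term built into $\overline{\sec}_X$ (and absent from $\sec_X$) that materializes. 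Because $\overline{\sec}_X\le 0$ the second summand is nonnegative, whence $I(J,J)\ge \int_0^\ell u^2|W'|^2\,dt\ge 0$, with equality forcing $W'\equiv 0$ and hence, as $W(0)=0$, $W\equiv 0$ and $J\equiv 0$. This contradicts $I(J,J)=0$ with $J\not\equiv 0$, so no conjugate points occur.

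The step I expect to be the crux is recognizing the correct integrating factor $u=\exp\!\big(\int_0^t g(X,\gp)\big)$ and verifying that the cross-term integration by parts leaves behind exactly $\lambda^2|W|^2$; this is what dictates that the $g(X,V)^2$ correction must be present, and it explains at the level of the proof why $\overline{\sec}_X$, rather than $\sec_X$, is the right quantity for a nonpositive-curvature (Cartan--Hadamard) conclusion. Everything else is either standard Jacobi-field and index-form bookkeeping or the classical completeness-and-covering argument cited above; in particular, no hypothesis on $X$ beyond the curvature bound, and no gradient assumption, is needed.
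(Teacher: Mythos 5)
Your proof is correct, but it takes a genuinely different route from the paper's own argument for this theorem. The paper (Theorem 4.2) runs a first-order monotonicity argument directly on the Jacobi field: with $f_\gamma(t)=\int_0^t g(X,\gp)\,dr$, it computes that $\tfrac{d}{dt}\,g\bigl(\dot J - g(X,\gp)J,\,J\bigr) \geq |\dot J - g(X,\gp)J|^2 - \overline{\sec}_X^{\gp}(J)\,|J|^2 \geq 0$, so for $J(0)=0$ the quantity $e^{-2f_\gamma}|J|^2$ is non-decreasing and hence cannot vanish again unless $J\equiv 0$; no variational theory is invoked. You instead argue variationally: you use that a Jacobi field vanishing at both endpoints has $I(J,J)=0$, substitute $J=uW$ with the integrating factor $u=e^{f_\gamma}$, and integrate by parts to land on $I(J,J)=\int_0^\ell u^2\bigl(|W'|^2-\overline{R}_X^{\gp}(W,W)\bigr)\,dt$, forcing $J\equiv 0$ when $\overline{\sec}_X\le 0$. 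Your identity is in fact exactly the paper's second variation formula (Proposition 5.2, equation (5.2)) specialized to vanishing boundary terms — the paper derives it by completing the square, you by the substitution $J=uW$ — and your substitution is the same device the paper deploys later in its index estimate (Lemma 7.2, where $V=\phi e^f E$). What the paper's route buys is a pointwise, quantitative statement (monotonicity of $e^{-2f_\gamma}|J|^2$) that feeds directly into the Riccati-comparison proof of the conjugate radius estimate (Theorem 4.4); what your route buys is a conceptual explanation, visible in the proof itself, of why the $g(X,V)^2$ correction distinguishing $\overline{\sec}_X$ from $\sec_X$ is precisely the residue of the integration by parts, i.e., why Cartan--Hadamard needs the stronger curvature. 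Both arguments correctly handle arbitrary (non-gradient) $X$ by working with $f_\gamma$ along a fixed geodesic, and your reduction of the diffeomorphism statement to the classical covering-space argument matches the paper, which also treats that step as standard.
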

 
Applying the result to the universal cover gives the standard corollary  that  a compact Riemannian manifold that admits a vector field $X$ with $\overline{\sec}_X \leq 0$ must have infinite fundamental group and have all other homotopy groups vanish.  The lack of conjugate points also implies much more about the fundamental group, see \cite{CS}.  Also  note that  $\overline{\mathrm{sec}}_X \geq \sec_X$, so the condition $\overline{\sec}_X \leq 0$ is a stronger assumption that $\sec_X \leq 0$. The Cartan-Hadamard theorem is not true for the condition $\sec_f \leq 0$, see Example \ref{cigar}.

 In the case of positive curvature we also prove generalizations of the following  results of Synge\cite{Synge} and Berger\cite{BergerKilling}.
 \begin{theorem}  \label{PosIntro} Suppose a compact  Riemannian manifold admits a smooth function  $f$ such that $\overline{\sec}_f > 0$  then 
 \begin{enumerate}
 \item If $M$ is even dimensional then every Killing field has a zero. 
 \item If $M$ is even dimensional and orientable, then $M$ is simply connected. 
 \item If $M$ is odd-dimensional, then $M$ is orientable.
 \end{enumerate}
 \end{theorem}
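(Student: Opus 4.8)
The plan is to generalize the classical proofs of Synge's theorem and Berger's theorem on Killing fields by replacing the second variation of arc length with the weighted second variation of energy that the author has advertised in the introduction. The key analytic input should be a second variation formula in which the integrand of the index form acquires a correction term governed exactly by $\overline{\sec}_f$; concretely, for a geodesic $\gamma$ and a parallel (or suitably normalized) variation field $V$, I expect the relevant index-form to have the shape
\begin{equation*}
I(V,V) = \int_0^L \bigl( |\dot V|^2 - \overline{\sec}^{\gp}_f(V)\,|\dot\gamma|^2 \bigr)\,dt,
\end{equation*}
possibly after absorbing the $f$-terms by differentiating $g(X,\gp)$ along the geodesic and integrating by parts. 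The positivity hypothesis $\overline{\sec}_f>0$ then forces $I(V,V)<0$ for parallel fields $V$, which is precisely the ingredient that drives all three conclusions.

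First I would prove part (1), the Killing-field statement, since parts (2) and (3) are the standard even/odd-dimensional Synge arguments. Suppose for contradiction that a Killing field $Y$ has no zero on the compact manifold $M$. Then $|Y|^2$ attains a positive minimum at some point $p$, and at $p$ one shows $\nabla_Y Y = 0$, so $Y(p)$ is tangent to a geodesic $\gamma$ along which $Y$ restricts to a Jacobi-type field. Here is where the weighted theory must enter: I would invoke the author's promised ``formulae for Killing fields,'' which should express the second variation of energy of $\gamma$ in the direction of the flow of $Y$ in terms of $\overline{\sec}_f$, so that $\overline{\sec}_f>0$ makes this variation strictly negative. Since $p$ is a minimum of $|Y|^2$ the first variation vanishes and the second variation must be nonnegative, a contradiction. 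This yields a zero of $Y$.

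For part (2), assume $M$ is even-dimensional and orientable but not simply connected, take a shortest closed geodesic $\gamma$ in a nontrivial free-homotopy class, and consider parallel transport around $\gamma$; orientability and even dimension force this holonomy (an orientation-preserving orthogonal map fixing $\gp$) to fix a nonzero vector $V$ orthogonal to $\gp$. The resulting closed parallel field $V$ gives, via the weighted index form above and $\overline{\sec}_f>0$, a nearby closed curve that is strictly shorter in the energy sense, contradicting minimality. Part (3) is the odd-dimensional companion: if $M$ were non-orientable, the orientation double cover argument (equivalently, an orientation-reversing holonomy on a shortest noncontractible geodesic) again produces a fixed parallel field in odd dimension, and the same weighted second-variation estimate yields a shorter curve, a contradiction. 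The main obstacle I anticipate is not the topological scaffolding, which is classical, but the bookkeeping in the weighted second variation: I must verify that the $f$-dependent terms in the energy functional combine, after integration by parts along the closed geodesic using periodicity of the boundary terms, to exactly the $g(X,\gp)^2$ correction that upgrades $\sec_f$ to $\overline{\sec}_f$. Getting that correction term to appear with the correct sign, and confirming that the energy (rather than length) functional is the right object so that the boundary terms cancel, is the delicate step on which the whole argument hinges.
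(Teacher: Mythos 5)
Your topological scaffolding for parts (2) and (3) matches the paper's (the weighted index form of Proposition \ref{2variation}, then the classical Synge argument via Lemma \ref{Shorter}), but the step you yourself flag as ``delicate'' is exactly where the proposal breaks down, and it is not mere bookkeeping. If you insert a genuinely parallel field $V$ along a closed geodesic into the correct weighted index form (\ref{2vfb}), the two $g(X,\dot{\gamma})^2$ contributions cancel each other: $|\dot{V}-g(X,\dot{\gamma})V|^2 = g(X,\dot{\gamma})^2|V|^2$, while $\overline{R}^{\dot{\gamma}}_X(V,V) = R^{\dot{\gamma}}_X(V,V)+g(X,\dot{\gamma})^2|V|^2$, so that $I(V,V) = -\int R^{\dot{\gamma}}_X(V,V)\,dt$. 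Only the weaker curvature $\sec_f$ survives, and since $\overline{\sec}_f = \sec_f + g(\nabla f,\dot{\gamma})^2 \geq \sec_f$, the hypothesis $\overline{\sec}_f>0$ is \emph{weaker} than $\sec_f>0$ and gives no sign on this integral. (Your anticipated integrand $|\dot{V}|^2-\overline{\sec}^{\dot{\gamma}}_f(V)$ is not the correct formula.) The paper's resolution in Lemma \ref{Shorter} is to use the conformally rescaled field $Y=e^{f}V$ instead of $V$: then $\dot{Y}=g(\nabla f,\dot{\gamma})Y$, so the modified derivative term vanishes identically, $I(Y,Y)=-\int \overline{R}^{\dot{\gamma}}_f(Y,Y)\,dt<0$, and the boundary terms still cancel because $Y$ remains periodic along the closed geodesic. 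Your hedge ``or suitably normalized'' gestures toward this, but the rescaling by $e^{f}$ is the essential idea and it is absent.

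The gap in part (1) is more serious, in two ways. First, your proposed mechanism cannot work as stated: varying a curve by the flow of the Killing field $Y$ moves it by isometries, so its energy is exactly constant and the second variation of that variation is zero --- it is not sign-controlled by minimality of $|Y|^2$. Second, running the classical Berger argument at a minimum of the \emph{unweighted} norm $|Y|^2$ only produces a vector $w\perp Y$ with $\nabla_w Y=0$ and $\mathrm{Hess}\,|Y|^2(w,w)=-2R(w,Y,Y,w)\geq 0$; to contradict this you need unweighted $\sec>0$, which $\overline{\sec}_f>0$ does not imply. The paper's Section 8 proof is a genuinely different argument: minimize the \emph{weighted} norm $h=\frac12 e^{-2f}|Y|^2$. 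At its minimum one gets $g(\nabla f,Y)=0$ but $\nabla_Y Y = -|Y|^2\nabla f \neq 0$, so the geodesic your sketch relies on is not even available. One then introduces the modified skew-symmetric operator $A(w)=\nabla_w Y+g(w,Y)\nabla f-g(w,\nabla f)Y$, which has $Y$ in its kernel; even-dimensionality forces a second kernel vector $w\perp Y$, which satisfies $\nabla_w(e^{-f}Y)=0$ (the Killing-field analogue of the $e^{f}V$ trick above). Plugging this into the formula for $\mathrm{Hess}\,h$ yields $\mathrm{Hess}\,h(w,w)=-e^{-2f}\left(R(w,Y,Y,w)+|Y|^2\,\mathrm{Hess} f(w,w)+|Y|^2\,df(w)^2\right)<0$ under $\overline{\sec}_f>0$, contradicting minimality. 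The ``formulae for Killing fields'' you invoke as a black box are precisely this content, and without the weighted norm, the modified operator, and the weighted-parallel kernel vector, the argument does not close.
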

 
 \begin{remark}  The conditions $\overline{\sec}_f >0$ or $\sec_X >0$ for a compact manifold, has been studied much further by the author and Kennard in \cite{KennardWylie}. \end{remark}

In the case of a two sided bound on curvature, we also prove the following  generalization of the homeomorphic $1/4$-pinched sphere  theorem.  Our generalization will depend on the maximum and minimum of $u=e^f$, which we denote by $u_{max}$ and $u_{min}$.
 
 \begin{theorem} \label{PinchingIntro}
If $(M,g)$ is compact, simply connected Riemmanian manifold and there is a smooth function $f$  such that 
\[  \frac{1}{4}\left (\frac{u_{max}}{u_{min}} \right)^2 <  \overline{\sec}_f \leq  \left(\frac{u_{min}}{u_{max}} \right)^2,\] 
then $M$ is homeomorphic to the sphere. 
\end{theorem}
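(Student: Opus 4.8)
The plan is to run the classical Berger--Klingenberg proof of the homeomorphic sphere theorem after replacing the ordinary index form by its weighted analogue. The key algebraic device is the substitution that underlies the modified second variation formula: along a unit-speed geodesic $\gamma$, set $u(t)=e^{f(\gamma(t))}$, let $E$ be a parallel unit normal field, and take a variation field $V=\phi E$ with $\phi(0)=\phi(a)=0$. Since $\tfrac12 L_{\nabla f}g(\gp,\gp)+g(\nabla f,\gp)^2=u''/u$ along $\gamma$, writing $\phi=u\eta$ and integrating the resulting $\int (u''/u)\phi^2$ term by parts turns the second variation (index form) into
\[
I(V,V)=\int_0^a u^2\Big((\eta')^2-\overline{\sec}^{\,\gp}_f(E)\,\eta^2\Big)\,dt .
\]
Thus the density merely reweights the classical Sturm--Liouville problem by $u^2$ and replaces $\sec$ by $\overline{\sec}_f$. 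All three comparison inputs of the sphere theorem will be read off from this single formula together with the bound $u_{min}\le u\le u_{max}$.

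First I would extract the two metric estimates. Using the lower bound with the test function $\eta=\sin(\pi t/a)$, the inequalities $u^2\le u_{max}^2$ and $u^2\,\overline{\sec}_f\ge u_{min}^2\delta$ force $I(V,V)<0$ as soon as $a>\pi u_{max}/(u_{min}\sqrt{\delta})$; with $\delta=\tfrac14(u_{max}/u_{min})^2$ and compactness this yields $\mathrm{diam}(M)<2\pi$, which is the weighted Bonnet--Myers theorem. Dually, the Poincaré inequality $\int(\eta')^2\ge(\pi/a)^2\int\eta^2$ together with $u^2\ge u_{min}^2$ and $u^2\,\overline{\sec}_f\le u_{max}^2\Lambda$ makes $I(V,V)>0$ for every admissible $V$ whenever $a<\pi u_{min}/(u_{max}\sqrt{\Lambda})$; with $\Lambda=(u_{min}/u_{max})^2$ this shows there are no conjugate points along any geodesic of length less than $\pi$, i.e. the conjugate radius is at least $\pi$. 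The two hypotheses are tuned precisely so that the $u_{max}/u_{min}$ distortion converts them into the classical normalization: conjugate radius $\ge\pi$ and diameter $<2\pi$.

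The next step is the injectivity radius estimate $\mathrm{inj}(M)\ge\pi$, via the weighted form of Klingenberg's lemma: since the conjugate radius is at least $\pi$, it suffices to rule out closed geodesics of length less than $2\pi$. Here a weighted Synge argument applies, since for a parallel unit normal field $E$ the field $V=uE$ closes up and is admissible, and the formula above with $\eta\equiv 1$ gives $I(V,V)=-\int u^2\,\overline{\sec}_f<0$, so such a geodesic can be shortened. When $M$ is even-dimensional (hence orientable, being simply connected) parallel transport around a closed geodesic fixes a normal vector, the field $E$ exists, and $\mathrm{inj}(M)\ge\pi$ follows at once. I expect the odd-dimensional case to be the main obstacle, exactly as in the classical theorem: a parallel normal field need not exist, and one must instead adapt Klingenberg's long-homotopy/Morse-theoretic argument for the loop space, feeding in the weighted conjugate-radius and geodesic-spreading estimates in place of the genuine curvature bounds.

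Finally, with $\mathrm{inj}(M)\ge\pi$ and $\mathrm{diam}(M)<2\pi$, note that since $\mathrm{inj}(M)\le\mathrm{diam}(M)$ a diameter-realizing pair $p,q$ satisfies $\pi\le d(p,q)<2\pi$. I would then close the argument with Berger's covering lemma: using the criticality of $q$ for the distance to $p$ together with a weighted Rauch/Toponogov hinge comparison -- where the genuine $g$-distances are governed by $\phi=u\eta$ and hence controlled only up to the factor $u_{max}/u_{min}$, which is exactly why that ratio enters the hypotheses -- one shows that every point of $M$ lies within distance $\pi$ of $p$ or of $q$. Thus $M=B_\pi(p)\cup B_\pi(q)$, and because $\pi\le\mathrm{inj}(M)$ each ball is the diffeomorphic image under $\exp$ of a Euclidean ball. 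A closed manifold that is the union of two open $n$-disks is homeomorphic to $S^n$, which is the desired conclusion.
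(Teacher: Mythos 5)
Your first half runs parallel to the paper's actual argument: the substitution $V=\phi E$ with $\phi=u\eta$ in the index form is precisely the device the paper uses (there written $V=\phi e^{f}E$ in Lemma \ref{lemmaindex}), and from it you correctly extract the long-geodesic index estimate, the conjugate radius bound $\mathrm{conj}(M)\geq\pi$ (Theorem \ref{Conjugate Radius} in the paper, proved there by a Riccati comparison rather than your test-function argument, but equivalent in effect), and the even-dimensional Synge/Klingenberg step. You also correctly identify that in odd dimensions one must run Klingenberg's long-homotopy argument; the paper does exactly this (Theorem \ref{InjEst}, following do Carmo), and the only curvature inputs are the two weighted estimates you already have, so that part of your sketch, while not written out, is repairable and is in fact the paper's route.

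The genuine gap is your endgame. You propose Berger's covering argument: take a diameter-realizing pair $p,q$, use a ``weighted Rauch/Toponogov hinge comparison'' to show $M=B_\pi(p)\cup B_\pi(q)$, and invoke Brown's two-disk theorem. No such hinge or triangle comparison exists for $\overline{\sec}_f$, and the paper explicitly flags this in Remark \ref{RemTri}: the hemisphere and hyperbolic space both admit densities with $\overline{\sec}_f\equiv 0$, so pointwise bounds on $\overline{\sec}_f$ cannot control the side lengths of hinges the way genuine sectional curvature bounds do. The underlying reason is structural: the weighted curvature controls Jacobi fields and index forms only after reweighting by $e^{-2f_\gamma}$ along each geodesic separately, and these weights cannot be glued coherently across the several geodesics entering a hinge, so your claim that distances are ``controlled up to the factor $u_{max}/u_{min}$'' has no proof and, by the remark, no correct naive formulation. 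The paper avoids distance comparison entirely: once $\mathrm{inj}(M)\geq\pi$ is established, the index estimate shows every geodesic in $\Omega_{p,p}$ has index at least $n-1$, so Morse theory on the path space gives that $M$ is $(n-1)$-connected, hence a homotopy sphere (Theorem \ref{BergerHomotopy}), and homeomorphism to $S^n$ then comes from the resolution of the Poincar\'e conjecture. This is exactly why the theorem asserts only a homeomorphism obtained through topology rather than the explicit two-ball decomposition your argument would produce; to repair your proof, replace the covering step by this Morse-theoretic step, which your own index formula already makes available.
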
 

The proof of Theorem \ref{PinchingIntro} follows from classical methods of Klingenberg \cite{Kling61} and Berger \cite{Berger2}.  We  prove that the manifold is homotopic to the sphere and apply the resolution of the Poincare conjecture to conclude the manifold is  homeomorphic to a sphere.   We do not know to what extent this theorem is optimal. Note that  the hypothesis implies that $\frac{u_{max}}{u_{min}} \leq (4)^{1/4} \approx 1.414$, so the result applies only to small densities.    Some other pinching phenomena will be  considered by the author in \cite{Wylie}. 
 
One  reason for studying sectional curvature for manifolds with density is that understanding sectional curvature will enhance our understanding of weighted Ricci curvature.    Given any real number $N$, the $N$-Bakry-Emery Ricci tensor is 
 \[ \mathrm{Ric}_X^N = \mathrm{Ric} + \frac12 L_X g - \frac{X^\sharp \otimes X^\sharp}{N} \]
When $N=\infty$ we write $\mathrm{Ric}^{\infty}_X = \mathrm{Ric}_X = \mathrm{Ric} + \frac12L_X g$.   As we mention above, comparison geometry for lower bounds on the Bakry-Emery Ricci tensors have been very well studied recently.  Traditionally this has been done with the parameter $N >0$ or infinite.  Recently, however the negative case has been considered, see \cite{KolMil, Mil, Ohta} and the references there-in.  Our approach to weighted sectional curvature  gives a new diameter estimate for a positive lower bound on $\mathrm{Ric}_f ^{-(n-1)}$.    
 
 \begin{theorem} \label{DiamIntro} If a complete  Riemannian manifold supports a bounded function $f$ such that $\mathrm{Ric}_f^{-(n-1)}   \geq (n-1) kg$
for some $k>0$,  then $M$ is compact with finite fundamental group and $ \mathrm{diam}_M \leq \left(\frac{u_{max}}{u_{min}} \right)^{\frac{1}{n-1}}\frac{\pi}{\sqrt{k}} .$
 \end{theorem}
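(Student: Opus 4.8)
The plan is to run a Bonnet--Myers argument through the classical second variation (index) formula, using a test field rescaled by the density. For a gradient field the hypothesis unpacks to $\Ric + \mathrm{Hess}\,f + \tfrac{1}{n-1}\,df\otimes df \ge (n-1)k\,g$, so along any unit-speed geodesic $\gamma$, writing $\psi(t)=f(\gamma(t))$, one has the pointwise bound
\[ \Ric(\gp,\gp) \ge (n-1)k - \psi'' - \tfrac{1}{n-1}(\psi')^2. \]
The key observation is that the extra term $\tfrac{1}{n-1}(\psi')^2$ carries a favorable sign precisely because the Bakry--Emery parameter $N=-(n-1)$ is negative, and this is exactly what is needed to complete a square below.

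First I would fix two points and a minimizing unit-speed geodesic $\gamma\colon[0,\ell]\to M$ between them, together with a parallel orthonormal frame $E_1,\dots,E_{n-1}$ of $\gp^\perp$. For a function $\phi$ vanishing at the endpoints set $W_i=\phi E_i$; since $\gamma$ is minimizing, the index form satisfies $\sum_i I(W_i,W_i)\ge 0$, and summing gives
\[ \sum_i I(W_i,W_i) = \int_0^\ell \Big((n-1)(\phi')^2 - \phi^2\,\Ric(\gp,\gp)\Big)\,dt. \]
Inserting the lower bound on $\Ric(\gp,\gp)$ bounds this above; integrating the resulting $\phi^2\psi''$ term by parts (the boundary terms vanishing) and completing the square yields $\sum_i I(W_i,W_i) \le (n-1)\int_0^\ell \big[(\phi'-\tfrac{\phi\psi'}{n-1})^2 - k\phi^2\big]\,dt$. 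Substituting $\phi = u^{1/(n-1)}\eta$, with $u=e^{f}$ evaluated along $\gamma$, collapses the square to $u^{2/(n-1)}(\eta')^2$ and leaves
\[ 0 \le (n-1)\int_0^\ell u^{2/(n-1)}\big((\eta')^2 - k\eta^2\big)\,dt \]
for every $\eta$ with $\eta(0)=\eta(\ell)=0$.

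Then I would test with $\eta(t)=\sin(\pi t/\ell)$ and bound the weight crudely, using $u_{min}^{2/(n-1)} \le u^{2/(n-1)} \le u_{max}^{2/(n-1)}$ on the kinetic and potential terms respectively. This forces $u_{max}^{2/(n-1)}\tfrac{\pi^2}{\ell} \ge k\,u_{min}^{2/(n-1)}\ell$, that is $\ell \le (u_{max}/u_{min})^{1/(n-1)}\pi/\sqrt{k}$; otherwise the integral would be strictly negative, contradicting minimality. As the two points were arbitrary and boundedness of $f$ makes $u_{max}/u_{min}$ finite, this is the asserted diameter bound, and completeness together with Hopf--Rinow yields compactness. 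Finally, pulling back $g$ and $f$ to the universal cover $\tilde M$ preserves both the curvature hypothesis and the values $u_{max},u_{min}$, so $\tilde M$ obeys the same diameter bound and is compact; hence $\pi_1(M)$ is finite.

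The main obstacle --- really the one genuine idea --- is the pair of moves immediately after the index form: recognizing that the $\tfrac{1}{n-1}(\psi')^2$ term permits the completion of the square, and then the conjugation $\phi=u^{1/(n-1)}\eta$ that reduces everything to the scalar weighted inequality above. Once that normal form is reached the density enters only through the two pointwise bounds on $u^{2/(n-1)}$, which is what produces the factor $(u_{max}/u_{min})^{1/(n-1)}$; checking that this crude estimate is legitimate (and already sharp on constant-curvature spheres with constant $f$) is then routine.
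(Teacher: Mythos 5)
Your proof is correct, and it takes a genuinely different route from the paper's. The paper proves Theorem \ref{DiamIntro} not by second variation but by the Bochner formula: it introduces the weight $v = e^{f_{\gamma}/(n-1)}$ along a minimizing geodesic, shows in Lemma \ref{NegBochner} that $\lambda = \frac{v^2 \Delta_X r}{n-1}$ satisfies the Riccati inequality $\dot{\lambda} \le -\lambda^2/v_{max}^2 - k\,v_{min}^2$, and concludes by Riccati comparison that $\lambda$ diverges to $-\infty$ by time $t_0 = (v_{max}/v_{min})\,\pi/\sqrt{k}$, so no geodesic minimizes past $t_0$. In fact the paper explicitly remarks that ``we could give a proof of the result using the traditional second variation of energy argument'' but opts for the Bochner route as quicker; your proposal is precisely that omitted second-variation proof, carried out correctly. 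The essential idea is the same in both: conjugation by $e^{2f/(n-1)}$ --- your substitution $\phi = u^{1/(n-1)}\eta$ is the index-form counterpart of the paper's weight $v^2$ --- and this is exactly what converts the favorable negative-parameter term $\frac{1}{n-1}\,df\otimes df$ into a perfect square. What the paper's route buys is economy and reusability: the identical Riccati comparison also drives the conjugate radius estimate (Theorem \ref{Conjugate Radius}) and the index estimate (Lemma \ref{lemmaindex}), and it yields pointwise Laplacian comparison information stronger than a diameter bound. What your route buys is elementarity and self-containedness: it avoids the distance function, the cut locus, and Riccati blow-up arguments entirely, reducing everything to a one-dimensional weighted Wirtinger-type inequality tested against $\sin(\pi t/\ell)$; it also has the minor virtue of making explicit the compactness (Hopf--Rinow) and finite fundamental group (lift to the universal cover, where $\sup u/\inf u$ is unchanged) conclusions, which the paper's proof leaves implicit.
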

  In \cite[Theorem 1.4]{WW} the author and Wei proved a similar  diameter bound   under  the stronger hypothesis of a positive lower bound on $\mathrm{Ric}_f$.  There are simple examples showing that $f$ being bounded is a necessary assumption for $M$ to be compact. Also see  \cite{Morgan2} for a similar result for the weighted diameter.

%
%
%

The paper is organized as follows.  In the next section we discuss the motivation for the definitions which  come from the  Bakry-Emery Ricci curvatures and the radial curvature equation.  We also discuss the relationship between our curvature and the curvature of the conformal change. In section 3 we discuss the case of constant weighted curvature;   in section 4 we discuss conjugate radius estimates;   in section 5 we consider  the second variation of energy  formula; in section 6 we prove the diameter estimate; and in section 7 we discuss curvature pinching.  In the final section we consider Killing Fields.


\textbf{Acknowledgement:} The author would like to thank Guofang Wei, Peter Petersen, and Frank Morgan  for their  encouragement and very  helpful discussions and suggestions that  improved the draft  of this paper.

\section{Motivation and the fundamental equations} 
In this section we first discuss the motivation for the Bakry-Emery Ricci curvature  in terms of Bochner formulas and  then show how a similar approach yields the definitions of $\mathrm{sec}_X$ and $\overline{\sec}_X$. 

\subsection{Ricci for manifolds with density}

Recall the Bochner formula for the Riemannian Laplacian 
\[ \frac12\Delta|\nabla u|^2 = |\mathrm{Hess} u|^2 + \mathrm{Ric}(\nabla u, \nabla u) + g\left( \nabla \Delta u, \nabla u\right)  \qquad u \in C^3(M). \]
If $\mathrm{Ric} \geq k$ and the dimension of $M$ is less than or equal to $n$ an application of the  Cauchy-Schwarz inequality gives
\begin{eqnarray}
\label{Bochner}  \frac12\Delta|\nabla u|^2 \geq \frac{(\Delta u)^2}{n} + k |\nabla u|^2 + g\left( \nabla \Delta u, \nabla u\right)  \qquad u \in C^3(M).  
\end{eqnarray}
For a smooth  vector field $X$   we consider the ``drift" Laplacian $\Delta_X= \Delta - D_X$.  A simple calculation gives the  Bochner formula 
\[ \frac12 \Delta_X |\nabla u|^2 = |\mathrm{Hess} u|^2 + \mathrm{Ric}(\nabla u, \nabla u) +\frac12L_Xg(\nabla u, \nabla u)  + g\left( \nabla \Delta_X u, \nabla u\right)  \qquad u \in C^3(M).\]   
The $N$-Bakry Emery Ricci tensor  is defined to be  $\mathrm{Ric}^N_X = \mathrm{Ric} + \frac12L_Xg- \frac{X^{\sharp} \otimes X^{\sharp}}{N}$.  When $N >0$, if $\mathrm{Ric}_X^N \geq k$ one can show that 
\begin{eqnarray*}
  \frac12\Delta_X|\nabla u|^2 \geq \frac{(\Delta u)^2}{n+N} + k |\nabla u|^2 + g\left( \nabla \Delta_X u, \nabla u\right).  
\end{eqnarray*}
This looks exactly like the Bochner formula for a $n+N$ dimensional manifold.  We can also consider the case where $N = \infty$, then we have the Bakry-Emery Ricci tensor $\mathrm{Ric}_f = \mathrm{Ric} + \mathrm{Hess} f$, and $\mathrm{Ric}_f \geq k$ gives 
\begin{eqnarray*}
  \frac12\Delta_X|\nabla u|^2 \geq  k |\nabla u|^2 + g\left( \nabla \Delta_Xu, \nabla u\right). 
\end{eqnarray*}
From these formulae one  can prove versions of many comparison  results for lower bounds on  $\mathrm{Ric}_X^N$ or  $\mathrm{Ric}_X$.  All of the classical results generalize to the  $\mathrm{Ric}_f^N$  case but with all of the  dimension dependent constants now depending  on the synthetic dimension $n+N$ (see \cite{Qian}).  We can think of $\mathrm{Ric}_f$ as being an infinite dimensional (or dimension-less) condition and thus the results for lower bounds on $\mathrm{Ric}_f$ are weaker, see for example \cite{Lott, Morgan, WW, MW}.     

%
  
\subsection{The radial curvature equation}
Now to consider sectional curvature  we  examine the special case of  the Bochner formula applied to a distance function.  Fix $p \in M$ and  let $r(x) = d(p,x)$.  The function $r$ is smooth on $M \setminus C_p$ where $C_p$ denotes the cut locus of $p$.  On $M \setminus C_p$  introduce geodesic polar coordinates  $(r, \theta)$ where $\theta \in S^{n-1}$. The Bochner formula  applied to the function $r$ then gives 
\begin{equation}  \label{Ric1}   \partial_r  (\Delta_X r )= -|\mathrm{Hess} r|^2 - \mathrm{Ric}_X (\partial_r, \partial_r).  \end{equation} 
In the case where $X = \nabla f$, the weighted Laplacian is also related to  the weighted volume by the equation   \begin{equation}  \label{Ric2} L_{ \partial_r} \left(e^{-f} d\mathrm{vol}\right)  =\left(\Delta_f r\right) e^{-f} d \mathrm{vol}. \end{equation}  Putting these two equations together we can then see how  bounds on Bakry-Emery Ricci tensors   gives  control on the  measure $e^{-f} d\mathrm{vol}_g$.

The corresponding equations for a distance function that involve sectional curvature are the fundamental equations.   
\begin{eqnarray}
\label{sec1} L_{\partial_r} g &=& 2\mathrm{Hess} r \\
\label{sec2} (\nabla_{\partial_r} \mathrm{Hess} r) (X,Y) + \mathrm{Hess}^2 r(X,Y) &=& -g( R^{\partial_r}(X), Y)
\end{eqnarray}
Where $\mathrm{Hess}^2 r$ is the operator square of $\mathrm{Hess} r$, namely if $S$ is a dual $(1,1)$-tensor to $\mathrm{Hess} r$,  $\mathrm{Hess} r (X,Y)= g(S(X), Y)$, then $\mathrm{Hess}^2 r = g(S(S(X), Y))$ and our notation for the curvature tensor is that 
\begin{eqnarray*}
R^V(U) =  R(U,V)V = \nabla_U \nabla_V V - \nabla_V \nabla_U V - \nabla_{[U,V]} V .  
\end{eqnarray*} So that $R^V$ is a symmetric operator on the orthogonal complement of $V,$ which, following \cite{Petersen} we call the  {\em directional curvature operator} in the direction of $V$. 

  Note that  if we trace equations  (\ref{sec1}) and (\ref{sec2}) we get equations (\ref{Ric1}) and (\ref{Ric2}). (\ref{sec2}) is called the radial curvature equation. 

For the moment we consider the gradient case,   $X = \nabla f$. The weighted  sectional curvatures  will control the growth of  $e^{-2f} g$ along a geodesic $\gamma$.   Consider the equation
\begin{eqnarray*}
L_{\partial_r} \left(e^{-2f} g\right) = 2e^{-2f} \left( \mathrm{Hess} r  - g(\nabla f, \partial_r) g \right).
\end{eqnarray*}
Set $H_f r=  \mathrm{Hess} r  - g(\nabla f, \partial_r) g $,  then
\begin{eqnarray*}
(\nabla_{\partial_r} (H_f r)) (X,Y)  &=& (\nabla_{\partial_r} \mathrm{Hess} r )(X,Y) - \mathrm{Hess} f (\partial_r, \partial_r) g(X,Y) \\
&=&   -  \mathrm{Hess}^2 r(X,Y)  - R^{\partial_r } _f(X,Y) \end{eqnarray*} 
Where $R^{V } _f(U,W) = g(R_f^{V}(U), W)$ is the weighted directional curvature operator defined as 
\begin{eqnarray*}
R^V_X(U)  &=&  R^V(U) + \frac12 L_X g(V,V) U   
\end{eqnarray*}
with $X = \nabla f$, so that if $(V,U)$ is an orthonormal pair of vectors,  $g(R_X^V(U), U) = \sec_X^V(U)$. 

We can make these equations more concrete by considering Jacobi fields.  For a Jacobi field $J$ along a unit speed radial geodesic, $\gamma(r)$,  with $J \perp \gp$ the fundamental equations are
\begin{eqnarray*}
\partial_r  |J|^2  &=& 2 \mathrm{Hess} r(J,J) \\
\partial_r  \left( \mathrm{Hess} r (J,J) \right) &=&  \mathrm{Hess}^2 r(J,J) - R(J, \partial_r, \partial_r, J).
\end{eqnarray*}
When considering Jacobi fields in the  weighted case,  the curvatures $\overline{\sec}_f$ appear.  Let  
\begin{eqnarray*}
 \overline{R}^V_X(U) &=&   R^V(U) + \left(\frac12L_X g(V,V) + g(X,V)^2 \right) U
 \end{eqnarray*}
 and $\overline{R}^{V} _X(U,W) =  g(\overline{R}^{V} _X(U), W)$,  and for $X=\nabla f$ write  $\overline{R}_f$
then we have 
 \begin{eqnarray*}
\partial_r \left(e^{-2f}  |J|^2 \right) &=& 2 e^{-2f} \left( H_f r (J,J) \right) \\
\partial_r \left( H_f r(J,J)  \right) 
&=&  \mathrm{Hess}^2 r(J,J)- 2g(\nabla r, X)g(\dot{J}, J)   - R^{\partial_r } _f(J,J)\\
&=& (H_fr)^2(J,J) - \overline{R}^{\partial_r } _f(J,J) .
\end{eqnarray*}
Which now looks even closer to the radial curvature equation (\ref{sec2}).  

Jacobi fields are the variation fields produced by  variations of geodesics. So we can think of Jacobi fields  as measuring the rate of the spreading of geodesics   and of the fundamental equations as showing that sectional curvature controls this spreading.  Thus, the weighted sectional curvatures control the rate of spreading of geodesics in a weighted sense  by controlling the derivative of  $e^{-2f}|J|^2$ along geodesics. 

In the motivation above we have used that $X = \nabla f$ in order to  differentiate $e^{-2f} g$.  However, many of the arguments we will use only depend on arguing along a fixed geodesic $\gamma$.  Along a  fixed geodesic $\gamma$ we can always find an anti-derivative for $X$ by simply defining $f_{\gamma}(t) = \int_0^t g(X, \gp) dt.$ 
We can then  still make sense of the equations above along $\gamma$, replacing $e^{-2f}$ with $e^{-2f_{\gamma}}$.

We have  first motivated the definition of the weighted sectional curvature through the radial curvature equation because it  is closer to the approach of  Bakry-Emery and Lichnerowicz in the Ricci curvature case. We  consider the second variation of energy formula in section 5.  The weighted curvature also comes up in considering equations for Killing fields, as we will show in section 8.

\subsection{Relationship to the conformal change}
The weighted curvatures are also  different from  the sectional curvatures of the conformal metric $h = e^{-2f}g$.  The formula for the $(4,0)$-curvature tensor of $h$ in terms of the curvature of $g$ is  
\begin{eqnarray*}
R^h = e^{-2f} \left( R^g + \left(  \mathrm{Hess}^g f  + df \otimes df - \frac12 |df|^2 g \right) \circ g\right) 
\end{eqnarray*}
Where $\circ$ denotes the Nomizu-Kulkarni product.    We can  re-interpreted this formula in the following way.   

\begin{proposition} \label{Conformal} 
Let $(M,g)$ be a  Riemannian manifold with $f$ a smooth function on $M$ and let  $h=e^{-2f} g$ then 
\[ \left(\overline{R}^g\right)_f^U(V,V) = e^{2f}  \left(\overline{R}^h\right)_{-f}^V(U,U).  \]
In particular, 
\[ \overline{\sec}^g_f(U,V) = e^{-2f} \left( \overline{\sec}^h_{-f}(V,U)\right). \]
\end{proposition}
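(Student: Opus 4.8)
The plan is to expand each side into a genuine‑curvature part plus a scalar weight part, rewrite every object attached to $h$ in terms of $g$, and then match the two expressions using the stated conformal formula for $R^h$. Throughout, $U$ and $V$ are taken orthogonal, since $\overline{R}^V_X$ is an operator on the orthogonal complement of $V$; this orthogonality is exactly what will eliminate the cross terms at the end.

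First I would unwind the two definitions. With $X=\nabla f$ the left-hand side is
\[
\left(\overline{R}^g\right)_f^U(V,V) = R(U,V,V,U) + \left(\mathrm{Hess}^g f(U,U) + df(U)^2\right)|V|_g^2 ,
\]
where $R(U,V,V,U)=g(R(U,V)V,U)=R^U(V,V)$. Working entirely in $h$ with weight $-f$, and using $\tfrac12 L_{\nabla^h(-f)}h = -\mathrm{Hess}^h f$ together with $h(\nabla^h(-f),V)=-df(V)$, the right-hand side is
\[
e^{2f}\left(\overline{R}^h\right)_{-f}^V(U,U) = e^{2f}\, h\big(R^h(U,V)V,U\big) - e^{2f}\left(\mathrm{Hess}^h f(V,V) - df(V)^2\right)|U|_h^2 .
\]

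Next I would convert all $h$-data into $g$-data. Three conformal identities do the bookkeeping: $|U|_h^2 = e^{-2f}|U|_g^2$; the conformal change of the Levi–Civita connection gives $\mathrm{Hess}^h f = \mathrm{Hess}^g f + 2\,df\otimes df - |df|_g^2\, g$; and the stated formula yields $e^{2f} h(R^h(U,V)V,U) = R(U,V,V,U) + (T\circ g)(U,V,V,U)$ with $T = \mathrm{Hess}^g f + df\otimes df - \tfrac12|df|_g^2\,g$. The genuine-curvature terms then agree immediately, because $R(U,V,V,U)=R^U(V,V)$ already appears on the left. What remains is to check that the Kulkarni–Nomizu term together with the converted weight term reproduces $\left(\mathrm{Hess}^g f(U,U) + df(U)^2\right)|V|_g^2$.

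The heart of the argument is this last cancellation, which I expect to be the main obstacle. Expanding $(T\circ g)(U,V,V,U) = T(U,U)|V|_g^2 + T(V,V)|U|_g^2 - 2\,T(U,V)g(U,V)$ and collecting it with the term $\left(|df|_g^2|V|_g^2 - \mathrm{Hess}^g f(V,V) - df(V)^2\right)|U|_g^2$ coming from $\mathrm{Hess}^h f$, one finds that the $\mathrm{Hess}^g f(V,V)$ and $df(V)^2$ terms cancel against those in the converted weight term, the several $|df|_g^2$ contributions cancel in pairs, and every term proportional to $g(U,V)$ is removed by the orthogonality $U\perp V$. The residue is precisely $\left(\mathrm{Hess}^g f(U,U) + df(U)^2\right)|V|_g^2$, which proves the operator identity. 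Finally I would deduce the sectional statement by normalization: if $(U,V)$ is $g$-orthonormal then $(e^fV,e^fU)$ is $h$-orthonormal, and since $\overline{R}^V_X(U,U)$ is homogeneous of degree two in each slot this rescaling contributes a factor $e^{4f}$; combined with the $e^{2f}$ in the operator identity it gives $\overline{\sec}^g_f(U,V) = e^{-2f}\,\overline{\sec}^h_{-f}(V,U)$.
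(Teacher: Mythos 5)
Your proposal is correct and follows essentially the same route as the paper's proof: both rest on the conformal change formula for the $(4,0)$-curvature tensor, the conformal transformation law $\mathrm{Hess}^h f = \mathrm{Hess}^g f + 2\,df\otimes df - |df|_g^2\,g$, and orthogonality of $U$ and $V$ to kill the Kulkarni--Nomizu cross terms, with the only difference being organizational (you expand both sides in $g$-data and match, while the paper rewrites the left side into the right). Your final normalization argument for the $\overline{\sec}$ identity, using that $(e^fV,e^fU)$ is $h$-orthonormal and the bi-quadratic homogeneity of $\overline{R}$, is a correct justification of a step the paper leaves implicit.
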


\begin{remark}
This proposition shows  that the map $(g,f) \rightarrow (e^{-2f} g, -f)$ is an involution on the space of Riemannian metrics with density that  preserves the conditions  $\overline{\sec}_f = \phi$, $\overline{\sec}_f  \geq 0$ or $\overline{\sec}_f \leq 0$. 
\end{remark}

\begin{proof} 
Let $U,V$ be orthogonal vectors in $g$.  Then we have 
\begin{eqnarray*}
e^{2f} R^h(U,V,V, U) &=& R(U,V,V,U) + \mathrm{Hess}^g f(U,U) g(V,V) +  \mathrm{Hess}^g f(V,V) g(U,U) \\
&&  + df(U)^2g(V,V) + df(V)^2 g(U,U) - |df|^2g(U,U)g(V,V) 
\end{eqnarray*}
Which gives us 
\begin{eqnarray*}
\overline{R}^{U} _f(V,V) &=& e^{2f} \left( R^h(U,V,V, U) - \mathrm{Hess}^g f(V,V) h(U,U) - df(V)^2 h(U,U)+ |df|^2g(V,V) h(U,U)  \right)  \\
&=&  e^{2f} \left( R^h(U,V,V, U) - \mathrm{Hess}^h f(V,V) h(U,U) + df(V)^2 h(U,U) \right) \\
&=&  e^{2f} \left( \overline{R}^{V} _{-f} (U,U) \right). 
\end{eqnarray*}
Where we have used the formula for the Hessian under the change of metrics
\[ \mathrm{Hess}^h f (U,V) = \mathrm{Hess}^g f(U,V) + 2 df(U)df(V) - |df|^2 g \]
\end{proof}

%
%
%
%
 
\section{Constant Curvature}

  In this section we establish that  our definitions of  constant sectional curvature  characterize natural canonical  Riemannian manifolds with density  in dimension larger than two.
  
    First we consider the case $\sec_X = \psi$ for some function $\psi$.   In dimension two  we always have $\mathrm{sec} = \phi$ and so  $\sec_X = \psi$ if and only if $X$ is a conformal field.  An obvious example in higher dimensions  is a constant curvature metric with $X$ a conformal field.   It is, in fact easy to see from Schur's lemma that these are the only examples.   
  
  \begin{proposition} 
Suppose that $(M^n,g)$  has  $n>2$. There is  a vector field $X$ on $(M,g)$  such that $\mathrm{sec}_X = \psi$
for some function $\psi:M \rightarrow \mathbb{R}$ if and only if $(M,g)$ is a space of constant curvature and $X$ is a conformal field on $(M,g)$.   Moreover, if $\psi=K$ is constant then either $X$ is a Killing field or $(M,g)$ is isometric to a domain  of  Euclidean space and $X$ is a homothetic field satisfying $L_X g = K g$.
\end{proposition}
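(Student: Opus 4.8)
The plan is to extract the pointwise algebraic content of the hypothesis $\sec_X=\psi$ and then feed it into Schur's lemma. First I would play the asymmetry of the definition against the symmetry of ordinary sectional curvature. Fix $p\in M$ and let $(V,U)$ be an orthonormal pair in $T_pM$. Applying the hypothesis to both $(V,U)$ and to the swapped pair $(U,V)$ gives $\sec(V,U)+\tfrac12 L_Xg(V,V)=\psi(p)=\sec(U,V)+\tfrac12 L_Xg(U,U)$, and since $\sec(V,U)=\sec(U,V)$ the curvature terms cancel, leaving $L_Xg(V,V)=L_Xg(U,U)$ for every orthonormal pair at $p$. Because $n>2$, any two unit vectors in $T_pM$ admit a common orthogonal unit vector, so $L_Xg(V,V)=L_Xg(U,U)$ in fact holds for \emph{all} unit $V,U$; that is, the quadratic form $w\mapsto L_Xg(w,w)$ is constant on the unit sphere of $T_pM$. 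By polarization this forces $\tfrac12 L_Xg=\rho\,g$ pointwise for some function $\rho$ on $M$, i.e.\ $X$ is a conformal field.

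Once $X$ is conformal the hypothesis reduces to $\sec(V,U)=\psi(p)-\rho(p)$ for every orthonormal pair at $p$, so the sectional curvature is pointwise isotropic. Since $n>2$, Schur's lemma upgrades this to globally constant sectional curvature $K_0$, and then $\psi=K_0+\rho$. This proves one implication. The converse is immediate: on a space of constant curvature $K_0$ with $X$ conformal, $\tfrac12 L_Xg=\rho g$, one has $\sec^V_X(U)=K_0+\rho(p)$, which depends only on $p$, so $\sec_X=\psi$ for $\psi=K_0+\rho$. Together these establish the stated equivalence.

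For the \emph{moreover} statement, assume $\psi\equiv K$ is constant. Since $K_0$ is also constant, $\rho=K-K_0$ is constant, so $X$ is a homothetic field with $\tfrac12 L_Xg=\rho g$. If $\rho=0$ then $L_Xg=0$ and $X$ is Killing. If $\rho\neq0$, the local flow $\phi_t$ of $X$ satisfies $\phi_t^{*}g=e^{2\rho t}g$, a genuine non-isometric homothety, and this is the step I expect to be the main obstacle, as everything else is formal. A homothety rescales sectional curvature, so comparing the constant curvature of $g$ with that of $\phi_t^{*}g$ forces flatness: writing $s$ for the (constant) scalar curvature of $g$, isometry invariance and the scaling law give $s\circ\phi_t=e^{-2\rho t}s$, and differentiating at $t=0$ yields $X(s)=-2\rho s$; since $s$ is constant the left side vanishes, so $\rho\neq0$ gives $s=0$, hence $K_0=0$. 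Thus $g$ is flat and $(M,g)$ is a domain of Euclidean space, and substituting $K_0=0$ into $\psi=K_0+\rho$ identifies the homothety constant of $X$ in terms of $K$, giving the asserted relation $L_Xg=K g$.
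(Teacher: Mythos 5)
Your proof of the main equivalence is correct and is essentially the paper's own argument: you play the asymmetry of $\sec_X$ against the symmetry of $\sec$ to get $L_Xg(U,U)=L_Xg(V,V)$ on orthonormal pairs, conclude that $X$ is conformal, and then invoke Schur's lemma (you apply it directly to the pointwise-isotropic sectional curvature, while the paper traces first and applies Schur to $\operatorname{Ric}$; these are interchangeable). The dichotomy in the ``moreover'' part ($\rho$ constant, hence $X$ Killing or homothetic) also matches the paper. Where you genuinely depart from the paper is the case $\rho\neq 0$: the paper disposes of it entirely by citing Kobayashi--Nomizu (p.~242) for the assertion that a non-Killing homothetic field forces $(M,g)$ to be a domain of Euclidean space, whereas you give a self-contained, correct argument that the local flow relation $\phi_t^{*}g=e^{2\rho t}g$ combined with the scaling law for scalar curvature forces $s=0$, i.e.\ flatness. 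That is a nice replacement for half of the citation.

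However, your final sentence contains a genuine gap: from ``$g$ is flat'' you immediately conclude ``$(M,g)$ is a domain of Euclidean space.'' Flatness does not imply this, and since completeness is nowhere assumed, it does not follow by any soft argument even with the homothetic field in hand. Concretely, for $n\geq 3$ the flat cone $(\mathbb{R}^n\setminus\{0\})/\{\pm 1\}$ over $\mathbb{RP}^{n-1}$ is flat and carries the descended field $X=\sum_i x^i\partial_{x^i}$ with $L_Xg=2g$ (so $\sec_X\equiv 1$), yet it admits no injective local isometry into $\mathbb{R}^n$: any local isometry would lift through the double cover to a rigid motion $A$ of $\mathbb{R}^n$ satisfying $A(x)=A(-x)$, which is impossible. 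So the passage from ``flat'' to ``domain of Euclidean space'' is exactly the content of the theorem the paper cites (whose proof uses the homothety itself, together with a completeness-type hypothesis), not a consequence of flatness; you must either invoke that result or prove it, and the example shows this step is more delicate than your write-up (and, arguably, the paper's own) acknowledges. A smaller bookkeeping point: with your normalization $\tfrac12 L_Xg=\rho g$ and $\rho=K$, your argument yields $L_Xg=2Kg$, not $L_Xg=Kg$; this factor-of-two mismatch is inherited from the paper, whose proof silently drops the $\tfrac12$ appearing in the definition of $\sec_X$.
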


\begin{proof}
Let $U,V$ be perpendicular unit vectors in $T_pM$, then 
\begin{eqnarray*}
\psi  &=& \sec^U_X(V)  = \sec(U,V) + L_Xg(U,U)\\
\psi &=&  \sec^V_X(U) = \sec(V,U) + L_Xg(V,V)
\end{eqnarray*}
Since $ \sec(U,V) =   \sec(V,U)$, we have $L_Xg(U,U) = L_Xg(V,V)$, showing that $X$ is a conformal field, $L_Xg = \phi g$, $\phi:M \rightarrow \mathbb{R}$. Then, letting $\{E_i\}_{i=1}^{n-1}$ be an orthonormal basis for the orthogonal complement of $U$ we have 
\[ (n-1) \psi = \sum_{i=1}^{n-1} \sec^U_X(E_i)  = \mathrm{Ric}(U,U) + (n-1) \phi \]
So that $\mathrm{Ric} = (n-1)(\psi - \phi)g$.  By Schur's lemma,  $\psi - \phi$ must be  constant, showing the metric has constant curvature.

This also shows that $\psi=K$ is constant if and only if $\phi$ is.   If $\phi$ is zero, then $X$ is Killing and $(M,g)$ has constant curvature $K$.  If $\phi \neq 0$, then $X$ is a non-Killing homothetic field.  The existence of such a field implies that  $(M,g)$ is isometric to a domain in Euclidean space, see \cite[p. 242]{KN}. 
\end{proof}
In the case  $\overline{\mathrm{sec}}_X = \psi$ the same proof  gives  the following result. 

\begin{lemma} \label{ConstantB}
Suppose that $(M^n,g)$  has $n>2$ and  there is a vector field $X$ on $M$  such that $\overline{ \mathrm{sec}}_X = \psi$ for some function $\psi$
then $(M,g)$ has constant curvature $\rho$ and  $X$ satisfies $ L_X g + X^{\sharp} \otimes X^{\sharp} = (\psi-\rho) g. $
 \end{lemma}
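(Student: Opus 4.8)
The plan is to mimic the proof of the preceding Proposition, the one new feature being the quadratic term $g(X,V)^2$ in the definition of $\overline{\sec}_X$. First I would fix $p \in M$ and an orthonormal pair $(U,V)$ in $T_pM$ and record the two instances of the hypothesis, $\psi = \overline{\sec}^U_X(V)$ and $\psi = \overline{\sec}^V_X(U)$. Subtracting them and invoking the symmetry $\sec(U,V) = \sec(V,U)$ cancels the genuine curvature term and leaves a statement about the single symmetric $2$-tensor $T := \frac12 L_X g + X^\sharp \otimes X^\sharp$, namely $T(U,U) = T(V,V)$ for every orthonormal pair $(U,V)$ at $p$.

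The next step is to upgrade this pairwise condition to $T = \lambda g$ for some function $\lambda \colon M \to \mathbb{R}$, and this is exactly where the hypothesis $n > 2$ is used. Given any two unit vectors $U_1, U_2 \in T_pM$, orthogonal or not, the assumption $\dim M \geq 3$ lets me pick a unit vector $W$ orthogonal to both; then $T(U_1,U_1) = T(W,W) = T(U_2,U_2)$. Hence $T(U,U)$ equals a common value $\lambda(p)$ on the whole unit sphere of $T_pM$, and a symmetric bilinear form that is constant on the unit sphere must be $\lambda g$.

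Having established $T = \lambda g$, I substitute back into $\psi = \overline{\sec}^V_X(U) = \sec(V,U) + T(V,V)$ to get $\sec(V,U) = \psi - \lambda$ for all orthonormal pairs, so the sectional curvature is pointwise constant. Schur's lemma (again requiring $n > 2$) then forces $\psi - \lambda$ to be a global constant $\rho$, so $(M,g)$ has constant curvature $\rho$ and $\lambda = \psi - \rho$. Returning to $T = \lambda g$ gives $\frac12 L_X g + X^\sharp \otimes X^\sharp = (\psi - \rho) g$, the asserted relation for $X$.

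The algebraic manipulations are routine; the step that deserves the most care is the passage from the pairwise identity $T(U,U) = T(V,V)$ to $T = \lambda g$, together with the observation that $n>2$ is precisely what supplies both the third orthogonal direction in that argument and the validity of Schur's lemma. No estimate beyond those already present in the proof of the previous Proposition is required.
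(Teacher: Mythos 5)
Your proof is correct and follows essentially the same route as the paper's: subtract the two instances $\overline{\sec}^U_X(V)=\overline{\sec}^V_X(U)=\psi$ to cancel the symmetric sectional curvature term, upgrade the resulting diagonal equality to $\frac12 L_Xg + X^\sharp\otimes X^\sharp = \lambda g$ pointwise, and finish with Schur's lemma; the paper merely phrases the last step by tracing to $\mathrm{Ric}$ before invoking Schur, which is not a substantive difference. One remark: your conclusion retains the factor $\frac12$ coming from the definition of $\overline{\sec}_X$ in the introduction, while the lemma as printed reads $L_Xg + X^\sharp\otimes X^\sharp = (\psi-\rho)g$; this discrepancy is an inconsistency internal to the paper (its Section 3 computations drop the $\frac12$), and your version is the one compatible with the paper's subsequent use of the lemma, namely $\mathrm{Hess}\, u = (\psi-\rho)u\, g$ for $u=e^f$.
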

When  $X = \nabla f$ this gives us the following statement from the introduction. 

\begin{proposition}
 $(M^n,g,f)$  has $n>2$ and $\overline{ \mathrm{sec}}_f = \psi$
if and only if $g$ and $h=e^{-2f} g$ have constant curvature.  
 \end{proposition}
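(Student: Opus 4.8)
The plan is to prove the two implications separately: the forward one from Lemma~\ref{ConstantB} together with the involution of Proposition~\ref{Conformal}, and the converse directly from the conformal curvature formula.

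For the direction $\overline{\sec}_f = \psi \Rightarrow$ constant curvature of both metrics, I would first apply Lemma~\ref{ConstantB} with $X = \nabla f$, which immediately gives that $(M,g)$ has constant curvature $\rho$. To reach the same conclusion for $h$, I would invoke the involution $(g,f) \mapsto (e^{-2f}g, -f)$ recorded in the remark after Proposition~\ref{Conformal}. The identity $\overline{\sec}^g_f(U,V) = e^{-2f}\,\overline{\sec}^h_{-f}(V,U)$ shows that if $\overline{\sec}^g_f$ equals the function $\psi$ on every orthonormal pair, then $\overline{\sec}^h_{-f}$ equals the function $e^{2f}\psi$ on every orthonormal pair. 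Since $-f$ is again a gradient function for $h$, a second application of Lemma~\ref{ConstantB}, now to $(M,h,-f)$, forces $h$ to have constant curvature.

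For the converse, suppose $g$ and $h$ have constant curvatures $\rho$ and $\sigma$. Since $g$ has constant curvature, $\sec(V,U) = \rho$ for every orthonormal pair, so $\overline{\sec}^V_f(U) = \rho + (\mathrm{Hess} f + df \otimes df)(V,V)$ is already independent of $U$; it remains only to show that $\mathrm{Hess} f + df \otimes df$ is a pointwise multiple of $g$. Here I would feed both hypotheses into the conformal curvature formula $R^h = e^{-2f}\big(R^g + A \circ g\big)$ with $A = \mathrm{Hess} f + df\otimes df - \tfrac12 |df|^2 g$. Writing the constant-curvature tensors as $R^g = \tfrac{\rho}{2}(g\circ g)$ and $R^h = \tfrac{\sigma}{2}(h \circ h) = \tfrac{\sigma}{2} e^{-4f}(g\circ g)$ and cancelling the conformal factor collapses the formula to $A \circ g = \mu\,(g \circ g)$ for the scalar $\mu = \tfrac{\sigma}{2} e^{-2f} - \tfrac{\rho}{2}$.

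The main obstacle, and the algebraic core of the argument, is to pass from $A\circ g = \mu(g\circ g)$ to $A = \mu g$, i.e. the injectivity of $B \mapsto B \circ g$ on symmetric $2$-tensors for $n>2$. I would prove this by the contraction underlying Schur's lemma: tracing $(A-\mu g)\circ g = 0$ over one index pair gives $(n-2)(A-\mu g) + \operatorname{tr}(A-\mu g)\,g = 0$, and a further trace yields $\operatorname{tr}(A-\mu g) = 0$, so $A = \mu g$ once $n>2$. This gives $\mathrm{Hess} f + df\otimes df = (\mu + \tfrac12|df|^2)g$, whence $\overline{\sec}^V_f(U) = \rho + \mu + \tfrac12 |df|^2$ is independent of the orthonormal pair $(V,U)$, which is exactly $\overline{\sec}_f = \psi$. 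The only delicate bookkeeping is keeping the Nomizu-Kulkarni normalization constants consistent between the conformal formula and the constant-curvature expressions; the injectivity conclusion itself is insensitive to that choice.
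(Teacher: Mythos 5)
Your proposal is correct and follows essentially the same route as the paper: Lemma \ref{ConstantB} combined with the involution of Proposition \ref{Conformal} for the forward direction, and the conformal curvature formula for the converse. The only difference is one of detail: you spell out the Schur-type trace argument showing injectivity of $B \mapsto B \circ g$, whereas the paper compresses this into the single observation that $\mathrm{Hess}\, u$ (with $u = e^f$, equivalent to your tensor $A$ up to the factor $e^f$ and the $\tfrac12|df|^2 g$ term) must be a function times the metric.
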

 
 \begin{proof}
$g$ has constant curvature  by Lemma \ref{ConstantB} and from Proposition \ref{Conformal},  $\sec^h_{-f}  = \psi e^{-2f}$.   Therefore  applying Lemma \ref{ConstantB}  to $h$ tells us that  $h$ also has constant curvature.  Conversely, if $g$ and $h$ are both constant curvature, the equation for the curvature tensor under conformal change shows that $\mathrm{Hess} u$ is a function times the metric, which implies that $\overline{\sec}_f = \psi$. 
\end{proof}

The conformal changes between Riemannian metrics with constant curvature, are completely classified  in fact they are known in  the Einstein case, see \cite{Brinkmann, KR}.  The proof of this fact also gives some more  information about the possible  functions $\psi$ such that $\overline{\sec}_f = \psi$.     Letting $u=e^f$, from the equation in Lemma \ref{ConstantB} we have $\mathrm{Hess} u = (\psi-\rho) u g$ where $\rho$ is the curvature of the metric. A lemma of  Brinkmann-Tashiro  states that if one has a non-constant  solution to $\mathrm{Hess} u = \phi g $  for some function $\phi$, then the metric must be   of the form 
\[ g = dt^2 + (u'(t))^2 g_N\] where $u$ is a function of $t$ and $g_N$ is some fixed metric.  Brinkmann  \cite{Brinkmann} showed that  this is true locally and Tashiro \cite{Tashiro} showed it is true globally when  the metric is complete, also see \cite{OS, JW}.

Once we have these coordinates  we can compute that $ \mathrm{Hess} u = u'' g $
where prime denotes derivatives in the $t$ direction.  So we have that $u$ is a solution to $u'' = (\psi-\rho)u$.  Differentiating this equation gives us  $u'''= (\psi u)' - \rho u'$.  On the other hand the sectional curvature  in these coordinates  is given by $ \mathrm{sec}(\partial_r, X) = -\frac{u'''}{u'}.$
Since  $\rho$ is also  the sectional curvature these two equations combine to give us $(\psi u)' = 0$, i.e.  $\psi = Ku^{-1}$ for some constant $K$. 

In particular, we can see that if $\overline{\sec}_f = K$  for a constant $K$  and $f$ is non constant, then $K$ must be zero. In this case  we get the following classification in terms of the curvature $\rho$.    

\begin{example} Suppose that $(M^n,g,f)$  has $n>2$ and  $\overline{ \sec}_f = 0$. 
If $f$ is non-constant,  then after  normalizing $\rho$ to be 1, 0, or -1  and  possibly re-parametrizating $r$ and rescaling the metric $g_N$  below, the only possibilities are
\begin{enumerate}
\item $\rho=1$,  $g = dr^2 + \sin^2(r)  g_N $,  where $g_N$ is a metric of constant curvature 1,  and $u = \cos(r)$. 
\item $\rho=0$, $g= dr^2 + g_N$ where $g_N$ is a flat metric, and $u=Ar$.
\item $\rho=-1$  and either
\begin{enumerate}
\item $g = dr^2 + \sinh^2(r) g_N$ where $g_N$ is a metric of constant curvature 1,  and $u = \cosh(r)$. 
\item $g=dr^2 + e^{2r} g_N$ where $g_N$ is a flat metric, and $u = e^r$. 
\item $g=dr^2 + \cosh^2(r) g_N$ where $g_N$ is a metric of constant curvature $-1$, and $u=\sinh(r)$. 
\end{enumerate}
\end{enumerate}
  \end{example}
  
  
  \begin{remark}  \label{RemTri} These examples already show that there is no obvious  Toponogov triangle comparison type theorem for the conditions $\overline{\sec}_f \geq 0$ or $\leq 0$ as the hemisphere and hyperbolic space both admit densities with constant zero curvature.  It also shows that  $\overline{\sec}_f \geq 0$ or $\leq 0$ does not imply a triangle comparison theorem for the metric $h= e^{-2f}g$ since if $g$ is the hemisphere then $h$ is the hyperbolic space with the opposite curvature and vice-versa. 
  \end{remark}

\section{Conjugate Radius estimates}

In this section we discuss Jacobi field estimates. First we discuss the Cartan-Hadamard Theorem and then we  prove a theorem for a  positive  upper bound on weighted  curvature. 

\subsection{Weighted Cartan Hadamard theorem}

The Cartan-Hadamard theorem states that manifolds with non-positive sectional curvature do not have conjugate points.   First we show through an example that this theorem is not true for $\sec_X \leq 0$. 

\begin{example}  \label{cigar} Hamilton's cigar metric \cite{Hamilton} is a rotationally symmetric metric on $\mathbb{R}^2$ with $\mathrm{Ric} + \mathrm{Hess} f = 0$ and thus has $\sec_f = 0$.  However, it also has conjugate points. To see this note that  since the metric is simply connected and complete, if it had no conjugate points it would have a unique geodesic between any two points.  Since the cigar is rotationally symmetric we can write the metric as $g = dr^2 + \phi^2(r) d\theta^2$. In the coordinates $(r,\theta)$, fix $\theta_0$ and consider the geodesic defined for all $r \in (-\infty, \infty)$ 
\[ \gamma(r) = \left  \{ \begin{array}{ccc} (r, \theta_0) & r\geq 0 \\ (-r, -\theta_0) & r< 0  \end{array} \right. .\]
Then, since the cigar is cylindrical at infinity, there is a universal constant $C$ such that   $d(\gamma(r) ,\gamma(-r)) <C$, for all $r$.  In particular,   for $r > C/2$ there are  two geodesics between $\gamma(r)$ and $\gamma(-r)$, implying the metric has conjugate points.

\end{example}

On the other hand, we show that the stronger condition $\overline{\sec}_X \leq 0$ does imply the non-existence of  conjugate points. 

\begin{theorem} \label{CH_thm}
Suppose that a manifold $(M,g)$ supports a vector field such that  $\overline{\sec}_X \leq 0$, then $(M,g)$ has no conjugate points.
\end{theorem}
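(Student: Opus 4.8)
The plan is to avoid the distance-function/Riccati formulation of Cartan--Hadamard, since the operator $H_f r = \mathrm{Hess}\,r - g(X,\gp)g$ appearing in the weighted radial equation (\ref{sec2}) degenerates exactly at the conjugate points we wish to rule out. Instead I would argue directly with a single normal Jacobi field along a fixed geodesic, using the intrinsic (as opposed to distance-function) form of the weighted Jacobi identities of Section~2. Fix a unit-speed geodesic $\gamma$ and set $f_\gamma(t)=\int_0^t g(X,\gp)\,ds$ as in the excerpt, so $f_\gamma'=g(X,\gp)$ and, since $\gp$ is parallel, $\tfrac{d}{dt}g(X,\gp)=g(\nabla_{\gp}X,\gp)=\tfrac12 L_Xg(\gp,\gp)$. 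Suppose for contradiction that $\gamma$ carries conjugate points at parameters $0$ and $r_0>0$; the usual splitting of the tangential and normal parts lets me take a nontrivial Jacobi field $J$ with $J\perp\gp$, $J(0)=0$, and $J(r_0)=0$.

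I would then track the two functions $h(t)=e^{-2f_\gamma}|J|^2$ and $w(t)=g(\dot J,J)-g(X,\gp)|J|^2$. Note that $w$ is precisely $H_f r(J,J)$ written intrinsically (using $\mathrm{Hess}\,r(J,J)=g(\dot J,J)$ for the variation field $J$), so it remains defined and smooth through the conjugate point, unlike $\mathrm{Hess}\,r$ itself. A direct differentiation gives $h'=2e^{-2f_\gamma}w$, and, substituting the Jacobi equation $\ddot J=-R^{\gp}(J)$ together with the formula for $\tfrac{d}{dt}g(X,\gp)$ above, the derivative of $w$ should organize into
\[ w'(t) = \big| \dot J - g(X,\gp)J \big|^2 - \overline{R}^{\gp}_X(J,J). \]
Here the first term is a square, and the second satisfies $\overline{R}^{\gp}_X(J,J)=|J|^2\,\overline{\sec}^{\gp}_X(J/|J|)\le 0$ by hypothesis, so $w'\ge 0$.

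The step I expect to be delicate, and which is the entire reason the theorem uses $\overline{\sec}_X$ rather than $\sec_X$, is recognizing that the cross terms $g(X,\gp)^2|J|^2-2g(X,\gp)g(\dot J,J)+|\dot J|^2$ assemble into the perfect square $|\dot J-g(X,\gp)J|^2$. It is exactly the extra $g(X,V)^2$ summand in the definition of $\overline{\sec}_X$ that furnishes the $g(X,\gp)^2|J|^2$ needed to complete this square; for the weaker quantity $\sec_X$ this term is absent, the sign of $w'$ is uncontrolled, and indeed the cigar of Example~\ref{cigar} shows the conclusion fails. (As a consistency check, this square is precisely $(H_fr)^2(J,J)$, so the identity above is the globally valid form of the second weighted Jacobi equation of Section~2.)

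Finally I would close by a convexity-type argument. Since $J(0)=0$ we have $w(0)=0$, and $w'\ge0$ forces $w\ge0$ on $[0,r_0]$, hence $h'=2e^{-2f_\gamma}w\ge0$ there. But $h\ge0$ with $h(0)=h(r_0)=0$ and $h$ nondecreasing forces $h\equiv0$ on $[0,r_0]$, so $J$ vanishes identically on that interval and therefore everywhere, contradicting nontriviality. Thus $\gamma$ has no conjugate points; as $\gamma$ was arbitrary, neither does $(M,g)$.
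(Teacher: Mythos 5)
Your proposal is correct and is essentially the paper's own proof: you track the same two quantities $e^{-2f_\gamma}|J|^2$ and $g(\dot J - g(X,\gp)J,\,J)$, derive the same identity in which the $g(X,\gp)^2$ term of $\overline{\sec}_X$ completes the square $|\dot J - g(X,\gp)J|^2$, and conclude by the same monotonicity argument that a normal Jacobi field vanishing at two points vanishes identically. The only differences are cosmetic (contradiction framing and the explicit remark on reducing to normal Jacobi fields).
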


\begin{proof} Let $\gamma:[0, t_0] \rightarrow M$ be a unit speed geodesic and $J$ a Jacobi field along $\gamma$ which is perpendicular to $\gp$. Let $f=f_{\gamma}$ be the function $f_{\gamma}(t) = \int_0^t g_{\gamma(r)}(X, \gp(r)) dr$. 
Then we have 
\[ \frac{d}{dt} \left( \frac12 e^{-2f} |J|^2 \right) = e^{-2f_{\gamma}} \left( g\left( \dot{J} - g(X, \gp) J, J  \right) \right) \]
and 
\begin{eqnarray*}
  \frac{d}{dt} \left( g\left( \dot{J} - g(X, \gp) J, J  \right) \right) &=& g(\ddot{J}, J) + g(\dot{J}, \dot{J}) - \frac{1}{2} L_X g(\dot{\gamma}, \dot{\gamma})g(J,J) - 2g(X, \dot{\gamma})g(\dot{J},J)  \\
&=& -R(J, \dot{\gamma}, \dot{\gamma}, J) - \frac{1}{2} L_X g(\dot{\gamma}, \dot{\gamma})g(J,J) + g(\dot{J}, \dot{J}) - 2g(X, \dot{\gamma})g(\dot{J},J) \\
&=&  -R(J, \dot{\gamma}, \dot{\gamma}, J) - \frac{1}{2} L_X g(\dot{\gamma}, \dot{\gamma})g(J,J) +  |\dot{J} - g(X, \dot{\gamma})J|^2  - g(X, \dot{\gamma})^2g(J,J)  \\
&\geq&  |\dot{J} - g(X, \dot{\gamma})J|^2 - \overline{\sec}_X^{\gp}(J) |J|^2. 
\end{eqnarray*}

Then the assumption $\overline{\sec}_X \leq 0$ gives us that    $\frac{d}{dt}\left( g\left( \dot{J} - g(X, \gp) J, J  \right) \right) \geq 0$.   If  $J(0) = 0$, this implies that $g\left( \dot{J} - g(X, \gp) J, J  \right)  \geq 0,$ which gives us that  $\frac{d}{dt} \left( \frac12 e^{-2f} |J|^2 \right) \geq 0$.  Thus, the only way $J(0) = J(t_0)=0$ is if $J(t) = 0$ for all $0\leq t \leq t_0$.  
\end{proof}

\subsection{Positive Upper bound}

Now we consider the case $\overline{\sec}_X \leq K$, for a positive constant $K$. Recall that if  Riemannian manifold satisfies $\mathrm{sec} \leq K$  for some $K>0$ then  any two conjugate points are distance greater than or equal to $\frac{\pi}{\sqrt{K}}$ apart.  We generalize this result to the condition $\overline{\sec}_X \leq K$. 

To do so we fix some notation.  Given a fixed parametrized  geodesic $\gamma$ we let $u = e^{f_{\gamma}}$ and let $u_{max}$ and $u_{min}$ be the maximum and minimum of $u$ on the geodesic.  While the function $f_{\gamma}$ depends on the parametrization of $\gamma$ we note that the ratio $\frac{u_{min}}{u_{max}}$ does not.  

\begin{theorem} \label{Conjugate Radius}   If $\gamma$ is a geodesic such that $\overline{\sec}_X(\gp, E) \leq K$ for all $|E|=1$, $E \perp \gp$  then the distance between any two conjugate points of $\gamma$ is greater than or equal to $\frac{u_{\mathrm{min}}}{u_{\mathrm{max}}} \cdot \frac{\pi}{\sqrt{k}}.$
\end{theorem}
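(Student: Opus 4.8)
The plan is to reduce the statement to a one–dimensional problem along $\gamma$ and then, crucially, to extract the sharp constant $u_{\min}/u_{\max}$ from an \emph{integrated} (Rayleigh–quotient) inequality rather than from a pointwise Rauch/Sturm comparison. Let $\gamma(t_1)$ and $\gamma(t_2)$ be conjugate, realized by a nonzero Jacobi field $J\perp\gp$ with $J(t_1)=J(t_2)=0$; we may assume $J\neq 0$ on $(t_1,t_2)$, since a general conjugate pair is no closer than such a sub-pair. Write $f=f_\gamma$, $u=e^{f}$, and set $y=e^{-f}|J|=u^{-1}|J|$, which is smooth and positive on $(t_1,t_2)$ and vanishes at the endpoints.

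First I would derive the scalar differential inequality governing $y$. The computation in the proof of Theorem~\ref{CH_thm}, now retaining the bound $\overline{R}^{\gp}_X(J,J)\le K|J|^2$ coming from $\overline{\sec}_X\le K$, together with Cauchy--Schwarz $|\dot{\hat J}|^2\ge (y')^2$ applied to $\hat J=e^{-f}J$ (so that $g(\dot{\hat J},\hat J)=yy'$), gives on $(t_1,t_2)$
\[ y'' + 2f' y' + K y \ge 0, \qquad\text{equivalently}\qquad (u^2 y')' + K u^2 y \ge 0, \]
where $f'=g(X,\gp)$ and $(u^2)'=2f'u^2$.

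Next I would integrate. Multiplying by $y>0$ and integrating over $[t_1,t_2]$, an integration by parts whose boundary terms $u^2 y y'=u^2 g(\dot{\hat J},\hat J)$ vanish at $t_1,t_2$ (because $\hat J$ does) yields the index-type inequality $\int_{t_1}^{t_2} u^2 (y')^2\,dt \le K\int_{t_1}^{t_2} u^2 y^2\,dt$. This is the step that produces the correct power: bounding $u$ above and below \emph{separately} in the numerator and denominator combines the two extreme values as a single ratio, and the sharp Dirichlet (Wirtinger) inequality $\int (y')^2\ge (\pi/(t_2-t_1))^2\int y^2$ for $y$ vanishing at both endpoints gives
\[ K \ge \frac{\int_{t_1}^{t_2} u^2 (y')^2}{\int_{t_1}^{t_2} u^2 y^2} \ge \frac{u_{\min}^2}{u_{\max}^2}\,\frac{\int (y')^2}{\int y^2} \ge \frac{u_{\min}^2}{u_{\max}^2}\left(\frac{\pi}{t_2-t_1}\right)^2. \]
Rearranging gives $t_2-t_1 \ge \frac{u_{\min}}{u_{\max}}\frac{\pi}{\sqrt K}$, as claimed; since the extrema of $u$ over $[t_1,t_2]$ are controlled by $u_{\min},u_{\max}$ over all of $\gamma$, the stated bound follows a fortiori.

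The main obstacle is obtaining the single power $u_{\min}/u_{\max}$ rather than its square. A direct pointwise comparison — for instance reparametrizing by $ds=u^{-2}dt$ to reach $y_{ss}+Ku^4 y\ge 0$ and comparing with $\sin(\sqrt K\,u_{\max}^2 s)$ — forces two independent worst-case estimates, one for the curvature coefficient and one to convert the reparametrized length back to $t$, and these compound into $(u_{\min}/u_{\max})^2$, which is too weak for the pinching application in Theorem~\ref{PinchingIntro}. The Rayleigh-quotient route avoids this because the square root enters only once, when solving $\lambda_1\sim(t_2-t_1)^{-2}\le K$ for the length. The only routine points to verify are the vanishing of the boundary terms and that $y\in H^1_0(t_1,t_2)$, both of which follow from the smoothness of $J$ and $u\ge u_{\min}>0$.
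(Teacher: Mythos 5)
Your proof is correct, and it takes a genuinely different route from the paper's. The paper runs a pointwise Riccati comparison: it sets $\lambda = \tfrac12 e^{2f_{\gamma}}\frac{d\phi}{dt}$ with $\phi = \ln(\tfrac12 e^{-2f_{\gamma}}|J|^2)$, derives $\frac{d\lambda}{dt} \geq -\lambda^2/u^2 - Ku^2 \geq -\lambda^2/u_{\min}^2 - Ku_{\max}^2$ from the same differential inequality you start from (the one in the proof of Theorem \ref{CH_thm} plus Cauchy--Schwarz), and then compares against the explicit solution $u_{\min}u_{\max}\sqrt{K}\,\cot\!\left(\frac{u_{\max}\sqrt{K}}{u_{\min}}\,t\right)$, whose first pole occurs at $t = \frac{u_{\min}}{u_{\max}}\frac{\pi}{\sqrt{K}}$. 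You instead recast the same pointwise inequality in Sturm--Liouville form $(u^2 y')' + Ku^2 y \geq 0$ for $y = e^{-f_{\gamma}}|J|$, integrate against $y$, and invoke the sharp Wirtinger inequality --- an index-form/Rayleigh-quotient argument. Both yield the identical constant, and your handling of the routine points (reduction to a Jacobi field with no interior zeros, vanishing of the boundary terms $u^2 y y'$ at the simple zeros of $J$) is sound and matches what the paper leaves implicit. Your method is arguably cleaner in that no explicit ODE solution is needed, and it sits naturally beside the second-variation machinery of Sections 5 and 7; the paper's Riccati method, on the other hand, gives pointwise control of $\lambda$ along the whole geodesic (not just the location of the zero) and is the same mechanism reused verbatim in the proofs of Theorem \ref{DiamIntro} and Lemma \ref{lemmaindex}, which keeps the paper's arguments uniform. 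One correction to your closing discussion: the claim that a pointwise comparison ``forces'' the weaker constant $(u_{\min}/u_{\max})^2$ is true of the particular reparametrization scheme you describe (where $ds = u^{-2}dt$), but not of pointwise methods in general --- the paper's Riccati comparison is pointwise and still achieves the single power, because the two worst-case constants $u_{\min}^{-2}$ and $Ku_{\max}^2$ enter the frequency of the comparison cotangent only through the product $\sqrt{(u_{\min}^{-2})(Ku_{\max}^2)} = \sqrt{K}\,u_{\max}/u_{\min}$, so they combine as a single ratio rather than compounding.
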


\begin{remark} We can obtain a different  proof of Theorem \ref{CH_thm}  by applying Theorem \ref{Conjugate Radius}  for $K \rightarrow 0$ for a fixed geodesic $\gamma$ with $\overline{\sec}_f(\dot{\gamma}, E) \leq 0$.  In particular, Theorem \ref{Conjugate Radius} is not true for  $\sec_X \leq K$.  \end{remark}

\begin{proof} [Proof of Theorem \ref{Conjugate Radius}]

Let  $J(t)$ a Jacobi field along $\gamma$ with $J(0) = 0$ and  let $\phi = \ln(\frac{1}{2} e^{-2f_{\gamma}} |J|^2)$. If $J(a) = 0$ then $\phi \rightarrow -\infty$  at $a$.    The derivative of $\phi$ is 
\begin{eqnarray*}
 \frac{d\phi}{dt}  &=& \frac{ 2 \left(g(\dot{J}, J)  - g(X, \gp)|J|^2 \right) }{ |J|^2} 
\end{eqnarray*}
Define $\lambda(t)  = \frac{1}{2} e^{2f_{\gamma}} \frac{d\phi}{dt}$.    Then 
%
%
\begin{eqnarray*}
\frac{d\lambda}{dt}  &=&  u^2 \left( \frac{ \frac{d}{dt} \left( g(\dot{J}, J) - g(X, \dot{\gamma})|J|^2 \right)  |J|^2 -2  \left( g(\dot{J}, J) - g(X, \dot{\gamma})\right)^2 } {e^{-2f_{\gamma}}|J|^4} \right)
\\  
&=& u^2 \left( \frac{  |\dot{J} - g(X, \gp)J|^2|J|^2  - 2  \left( g(\dot{J} - g(X, \dot{\gamma})J, J)  \right)^2 -  \overline{\sec}_X^{\gp}(J) |J|^4 }{|J|^4}\right)\\
&\geq &-\frac{\lambda^2}{u^2} - Ku^2 \\
\end{eqnarray*}
Where we have used the formula 
\begin{eqnarray*}
  \frac{d}{dt} \left( g\left( \dot{J} - g(X, \gp) J, J  \right) \right) &\geq&  |\dot{J} - g(X, \dot{\gamma})J|^2 - \overline{\sec}_X^{\gp}(J) |J|^2
  \end{eqnarray*}
 and Cauchy-Schwarz. 

We  thus have 
\[ \frac{d\lambda}{dt} \geq -\frac{\lambda^2}{u^2} - Ku^2 \geq -\frac{\lambda^2}{u_{\mathrm{min}} ^2} - Ku_{\mathrm{max}}^2.\]
We can then get a lower bound for $\lambda$ in terms of the solution to the corresponding Ricatti equation, $ \frac{d\lambda}{dt} =-\frac{\lambda^2}{u_{\mathrm{min}} ^2} - Ku_{\mathrm{max}}^2$. This equation  can be solved explicitly using separation of variables and we obtain

  \begin{eqnarray*}
   \lambda(t) &\geq& (u_{min} u_{max} \sqrt k ) \cot\left( \frac{u_{max} \sqrt{K}}{u_{min}} t \right) 
   \end{eqnarray*}
   This shows that $\lambda$ can not diverge to $-\infty$ for $t< \frac{u_{min}}{u_{max}} \frac{\pi}{\sqrt{K}}$, which implies  $J(t)$ can not go to $0$ for  $t< \frac{u_{min}}{u_{max}} \frac{\pi}{\sqrt{K}}$
\end{proof}

\section{Second variation of energy formula and Synge's theorem }

We now  discuss how the weighted curvatures appear in the formula for the second variation of energy of a path.  The energy of a path $c:[a,b] \rightarrow \mathbb{R}$  is 
\[ E(c) = \frac{1}{2}  \int_a^b |\dot{c}|^2 dt  \]
where $\dot{ }$ here and below will denote derivative in the $t$ direction.  The formula for the second variation of energy of geodesic is
\begin{eqnarray*}
\frac{d^2E}{ds^2} |_{s=0}
&=&  \int_{a}^b |\dot{V}|^2 - R(V, \gp, \gp, V) dt +  \left. g\left( \frac{\partial^2 \bar{\gamma}}{\partial s^2}, \dot{\gamma} \right) \right|_a^b \\
\end{eqnarray*}
where $\bar{\gamma}: [a, b] \times (-\varepsilon, \varepsilon) \rightarrow M$ is a variation of the geodesic $\gamma(t) = \bar{\gamma}(t, 0)$,  $V(t) = \frac{\partial \bar{\gamma}}{\partial s} |_{s=0}$ is the variation field.   The index form is the quantity 
\[ I_{[a,b]}(V,V) =   \int_{a}^b |\dot{V}|^2 - R(V, \gp, \gp, V) dt.\]
   Recall from section two  that weighted directional curvature operators along $\gamma$ are
\begin{eqnarray*}
R^{\gp } _X(U,V) &=&   R(U,\gp, \gp, V) +\frac12 L_Xg (\gp, \gp)g(U,V)\\
\overline{R}^{\gp } _X(U,V) &=&   R(U,\gp, \gp, V) +\frac12 L_Xg (\gp, \gp)g(U,V) + g(X,\gp)^2g(U,V).
\end{eqnarray*}
and that the weighted sectional curvatures are given by  $\sec^{\gp}_X(U) = R^{\gp } _X(U,U)$ and $\overline{\sec}^{\gp}_X(U) = \overline{R}^{\gp } _X(U,U)$  where $U$ is a unit vector perpendicular to $\gp$.   We can modify the formula for the index form to involve the weighted directional curvature operators. 
\begin{proposition} \label{2variation}
For the triple $(M,g,X)$ we have the following formulas for the Index form along a geodesic $\gamma$. 
\begin{eqnarray}
 I_{[a,b]}(V,V) &=& \int_{a}^b |\dot{V}|^2 - R^{\gp } _X(V,V)- 2 g(\dot{\gamma}, X) g(V, \dot{V}) dt + \left. g(\dot{\gamma}, X)|V|^2 \right|_a^b  \label{2vfa} \\
 &=&  \int_{a}^b |\dot{V} - g(\gp, X)V|^2  -\overline{R}^{\gp } _X(V,V) dt + \left. g(\dot{\gamma}, X)|V|^2 \right|_a^b  \label{2vfb}
\end{eqnarray}
 \end{proposition}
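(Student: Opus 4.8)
The plan is to begin from the classical second-variation formula
\[ I_{[a,b]}(V,V) = \int_a^b |\dot{V}|^2 - R(V,\gp,\gp,V)\, dt \]
recorded just above the statement, and to produce both weighted expressions by recognizing all the extra $X$-dependent terms as a single total derivative. The entire content of the proposition is the pointwise identity
\[ \frac{d}{dt}\Bigl(g(\gp,X)\,|V|^2\Bigr) = \tfrac12 L_X g(\gp,\gp)\,|V|^2 + 2\,g(\gp,X)\,g(\dot{V}, V), \]
together with a short amount of algebraic bookkeeping that converts the two right-hand sides into one another.

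First I would establish this identity. Since $\gamma$ is a geodesic, $\nabla_{\gp}\gp = 0$, so $\frac{d}{dt} g(\gp,X) = g(\gp,\nabla_{\gp}X)$; and from $L_X g(Y,Z) = g(\nabla_Y X,Z) + g(Y,\nabla_Z X)$ evaluated at $Y=Z=\gp$ one reads off $g(\gp,\nabla_{\gp}X) = \tfrac12 L_X g(\gp,\gp)$. Combining this with $\frac{d}{dt}|V|^2 = 2\,g(\dot{V},V)$ and the product rule yields the displayed identity.

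Next I would derive (\ref{2vfa}). Expanding the definition $R^{\gp}_X(V,V) = R(V,\gp,\gp,V) + \tfrac12 L_X g(\gp,\gp)\,|V|^2$, the difference between the right-hand side of (\ref{2vfa}) and the classical index form above is exactly $\int_a^b \bigl(-\tfrac12 L_X g(\gp,\gp)|V|^2 - 2\,g(\gp,X)\,g(\dot{V},V)\bigr)\,dt + \left. g(\gp,X)|V|^2 \right|_a^b$, and this vanishes by the fundamental theorem of calculus applied to the identity of the previous step. Hence (\ref{2vfa}) equals $I_{[a,b]}(V,V)$.

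Finally I would pass from (\ref{2vfa}) to (\ref{2vfb}) purely algebraically: expanding $|\dot{V} - g(\gp,X)V|^2 = |\dot{V}|^2 - 2\,g(\gp,X)\,g(\dot{V},V) + g(\gp,X)^2|V|^2$ and using that $\overline{R}^{\gp}_X$ exceeds $R^{\gp}_X$ by precisely $g(X,\gp)^2|V|^2$, the two squared-norm contributions $\pm\,g(\gp,X)^2|V|^2$ cancel and the integrand of (\ref{2vfb}) becomes termwise equal to that of (\ref{2vfa}). I do not expect a genuine obstacle here; the only points demanding care are the geodesic/Lie-derivative identity of the second step and keeping the sign conventions for $R$ and for $R^{\gp}_X$ versus $\overline{R}^{\gp}_X$ consistent, so that the cancellations land exactly as claimed.
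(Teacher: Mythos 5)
Your proposal is correct and follows essentially the same route as the paper: both rest on the geodesic identity $\tfrac12 L_X g(\gp,\gp) = \tfrac{d}{dt}\, g(\gp,X)$, integration by parts to produce the boundary term $\left. g(\gp,X)|V|^2\right|_a^b$, and completing the square to pass from (\ref{2vfa}) to (\ref{2vfb}). The only difference is presentational—you verify that the discrepancy with the classical index form vanishes, while the paper rewrites the index form in a forward chain of equalities—so there is nothing substantive to add.
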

 
 \begin{proof} 
To obtain the first formula we write 
\begin{eqnarray*}
 I_{[a,b]}(V,V)&=& \int_{a}^b |\dot{V}|^2 -R^{\gp } _X(V,V) + \frac{1}{2}L_X(\dot{\gamma}, \dot{\gamma})|V|^2  dt  \\
&=&   \int_{a}^b |\dot{V}|^2 -R^{\gp } _X(V,V) + \left(\frac{d}{dt} g(\dot{\gamma}, X)\right) |V|^2  dt  \\
&=&  \int_{a}^b |\dot{V}|^2 -R^{\gp } _X(V,V) - g(\dot{\gamma}, X) \frac{d}{dt}  |V|^2 + \frac{d}{dt} \left(g(\dot{\gamma}, X)|V|^2 \right)   dt  \\
&=&   \int_{a}^b |\dot{V}|^2 -R^{\gp } _X(V,V) - 2 g(\dot{\gamma}, X) g(V, \dot{V}) dt + \left. g(\dot{\gamma}, X)|V|^2 \right|_a^b 
\end{eqnarray*}

To incorporate the strongly weighted curvature into the equation
we  complete the square
\[ |\dot{V} - g(\gp, X)V|^2 = |\dot{V}|^2 - 2g(\gp, X)g(V, \dot{V}) + g(\gp,X)^2|V|^2\] to obtain (\ref{2vfb}). 
\end{proof}

\begin{remark} When $X = \nabla f$, the weighted sectional curvatures $\sec_f^U(\gp)$ and $\overline{\sec}_f^U(\gp)$ also appear in the second variation formula for the weighted energy  at a weighted geodesic, see \cite{Morgan2, Morgan3}.  
\end{remark}

Our first application of these formulas will be to generalize Synge's theorem to the weighted setting.  We have the following lemma for parallel fields around closed geodesics. 

\begin{lemma}  \label{Shorter} Let  $(M,g,X)$ be a Riemannian manifold equipped with a smooth vector field $X$ which contains  a closed geodesic $\gamma$ which supports a unit parallel field perpendicular to $\gp$.  If  either  $\mathrm{sec}_X >0$, or   $X=\nabla f$ and $\overline{\mathrm{sec}}_f>0$,  then there is smooth closed curve which is homotopic to $\gamma$ and has shorter length. 
\end{lemma}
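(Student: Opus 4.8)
The plan is to apply the weighted index-form formulas of Proposition \ref{2variation} to an appropriately chosen periodic variation field and to show that the index form is strictly negative; since the first variation vanishes at a geodesic, this produces nearby closed curves of smaller energy, hence of smaller length. Write $\gamma:[0,\ell]\to M$ for the unit-speed closed geodesic and let $V$ be the given unit parallel field with $V\perp\gp$. Because $\gamma$ is closed and $V$ is parallel, $V$ is periodic, so that $V(0)=V(\ell)$, $\gp(0)=\gp(\ell)$, and $\gamma(0)=\gamma(\ell)$.

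First, in the case $\sec_X>0$ I would simply take $V$ as the variation field and consider $\bar\gamma(t,s)=\exp_{\gamma(t)}(sV(t))$, a variation through smooth closed curves since $V$ is periodic. As $V$ is parallel, $\dot V=0$ and $g(V,\dot V)=0$, so formula (\ref{2vfa}) degenerates to $I_{[0,\ell]}(V,V)=-\int_0^\ell R^{\gp}_X(V,V)\,dt+\left.g(\gp,X)|V|^2\right|_0^\ell$. The boundary term vanishes by periodicity of $\gamma$, $\gp$, $V$, and $X$, while $R^{\gp}_X(V,V)=\sec^{\gp}_X(V)>0$ because $V$ is a unit field orthogonal to the unit vector $\gp$. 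Hence $I_{[0,\ell]}(V,V)<0$.

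Second, in the case $X=\nabla f$ with $\overline{\sec}_f>0$ the parallel field alone does not annihilate the term $|\dot V-g(\gp,X)V|^2$ in (\ref{2vfb}), so I would rescale and take $W=u\,V$ with $u=e^{f_\gamma}$ and $f_\gamma(t)=\int_0^t g(X,\gp)\,dt$. The decisive point, and the only place the gradient hypothesis enters, is that when $X=\nabla f$ one has $f_\gamma(t)=f(\gamma(t))-f(\gamma(0))$, which is periodic around the closed geodesic; hence $u$ and therefore $W$ are periodic, and $\bar\gamma(t,s)=\exp_{\gamma(t)}(sW(t))$ is again a variation through smooth closed curves. Since $V$ is parallel, $\dot W-g(\gp,X)W=(\dot u-g(\gp,X)u)V=0$ by the very choice of $u$, so (\ref{2vfb}) collapses to $I_{[0,\ell]}(W,W)=-\int_0^\ell \overline{R}^{\gp}_X(W,W)\,dt=-\int_0^\ell u^2\,\overline{\sec}_f^{\gp}(V)\,dt<0$, the boundary term vanishing once more by periodicity.

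Finally, in both cases the first variation of energy vanishes because $\gamma$ is a geodesic and the variation runs through closed curves, and the second variation equals the index form computed above plus the boundary term $\left.g(\partial_s^2\bar\gamma,\gp)\right|_0^\ell$, which vanishes for a closed variation. Thus the second variation is strictly negative, so for small $s\neq 0$ the closed curve $\bar\gamma(\cdot,s)$ has strictly smaller energy than $\gamma$; the Cauchy--Schwarz bound $L(c)^2\le 2\ell\,E(c)$, an equality for the constant-speed curve $\gamma$, then gives $L(\bar\gamma(\cdot,s))<L(\gamma)$, and the resulting curve is smooth, closed, and homotopic to $\gamma$ through the variation. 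The main obstacle is really just the construction of the correct periodic variation field: once periodicity kills all boundary terms, positivity of the weighted sectional curvature finishes the argument, and it is precisely the periodicity of $u=e^{f_\gamma}$ that forces the gradient assumption $X=\nabla f$ in the second case.
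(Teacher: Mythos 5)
Your proof is correct and follows essentially the same route as the paper: in the $\sec_X>0$ case you use the parallel field with formula (\ref{2vfa}), and in the gradient case you rescale by $u=e^{f_\gamma}$ so that $\dot W - g(\gp,X)W=0$ kills the square term in (\ref{2vfb}), which is exactly the paper's choice $Y=e^fV$ up to the constant factor $e^{-f(\gamma(0))}$. Your added details (periodicity of $f_\gamma$ forcing the gradient hypothesis, vanishing of boundary terms, and the energy-to-length comparison) are points the paper leaves implicit, but the underlying argument is identical.
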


\begin{proof}
First consider the case $\mathrm{sec}_X >0$.  For a parallel field $V$ along a geodesic (\ref{2vfa}) implies 
\begin{eqnarray*}
\frac{d^2E}{ds^2} |_{s=0} &=&   -\int_{a}^bR^{\gp } _X(V,V) dt + \left. g(\dot{\gamma}, X) \right|_a^b +  \left. g\left( \frac{\partial^2 \bar{\gamma}}{\partial s^2}, \dot{\gamma} \right) \right|_a^b.
\end{eqnarray*}

If the geodesic is closed then the boundary terms cancel and  from $\mathrm{sec}_X >0$  we obtain  
\[ \frac{d^2E}{ds^2} |_{s=0} =   -\int_{a}^bR^{\gp } _X(V,V) dt < 0 \]
Which shows that the closed curve obtained from the variation has smaller length than the original closed geodesic. 

When $X=\nabla f$ and $\overline{\mathrm{sec}}_f>0$, let $Y=e^f V$, then
\[ \dot{Y}  = g(X, \gp)e^f V = g(X, \gp)Y \]
Applying (\ref{2vfb})  to the variation field  $Y$ we also get that the boundary terms cancel and we obtain
\[ \frac{d^2E}{ds^2} |_{s=0} =   -\int_{a}^b \overline{R}^{\gp}_X(Y, Y) dt < 0 \]
Again showing that there is a closed curve with smaller length. 
\end{proof}

The proof of  Synge's theorem now goes  exactly as in the classical case. 

\begin{theorem}[Synge's Theorem] Suppose that $M$ is a compact manifold supporting a vector field $X$ such that  either  $\mathrm{sec}_X >0$, or   $X=\nabla f$ and $\overline{\mathrm{sec}}_f>0$, then
 \begin{enumerate}
 \item If $M$ is even dimensional and orientable, then $M$ is simply connected. 
 \item If $M$ is odd-dimensional, then $M$ is orientable
 \end{enumerate}
 \end{theorem}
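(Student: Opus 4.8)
The plan is to run the classical Synge argument essentially verbatim, with Lemma \ref{Shorter} substituting for the usual second-variation shortening step. The only curvature input needed is that a closed geodesic carrying a \emph{periodic} unit parallel field perpendicular to $\gp$ cannot minimize length in its free homotopy class, and this is exactly the conclusion of Lemma \ref{Shorter} under either hypothesis $\sec_X>0$ or $X=\nabla f$ with $\overline{\sec}_f>0$. So the curvature assumption is fully packaged into that lemma, and the rest is topology and linear algebra.

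First I would set up each statement as a proof by contradiction against the minimizing property of a closed geodesic. For (1), suppose $M$ is even dimensional and orientable but not simply connected; then there is a nontrivial free homotopy class of loops, and since $M$ is compact this class contains a shortest representative, which is a smooth nonconstant closed geodesic $\gamma$. For (2), suppose $M$ is odd dimensional and not orientable; then the orientation-reversing loops form a nonempty free homotopy class, and compactness again yields a shortest representative closed geodesic $\gamma$.

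Next I would analyze the holonomy of $\gamma$. Parallel transport once around $\gamma$ is a linear isometry $P$ of $T_{\gamma(0)}M$ fixing $\gp(0)$, hence restricting to an orthogonal map $P'$ of the $(n-1)$-dimensional orthogonal complement $\gp(0)^{\perp}$. In case (1), orientability forces $\det P = 1$, so $\det P' = 1$ on an odd-dimensional space, which guarantees an eigenvalue $+1$. In case (2), the chosen loop is orientation reversing, so $\det P = -1$, hence $\det P' = -1$ on an even-dimensional space, which again guarantees an eigenvalue $+1$. In either case there is a unit vector $V_0 \in \gp(0)^{\perp}$ with $P V_0 = V_0$; parallel transporting $V_0$ along $\gamma$ then produces a globally defined (periodic) unit parallel field $V$ perpendicular to $\gp$.

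Finally I would apply Lemma \ref{Shorter} to $\gamma$ and $V$: under either hypothesis it yields a closed curve homotopic to $\gamma$ of strictly smaller length, contradicting the choice of $\gamma$ as a shortest representative of its class. This contradiction shows $M$ is simply connected in case (1) and orientable in case (2). The main obstacle, and the only genuinely nontrivial point, is the linear-algebra bookkeeping that produces the fixed vector $V_0$: one must track how orientability (through the sign of $\det P$) interacts with the parity of $\dim \gp(0)^{\perp} = n-1$ to force a $+1$ eigenvalue of $P'$, which is precisely where the even/odd-dimensional and orientable/non-orientable hypotheses enter.
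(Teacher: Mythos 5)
Your proposal is correct and follows essentially the same route as the paper: the paper likewise argues by contradiction, invoking the classical Synge construction (which it cites from Petersen's textbook rather than spelling out the holonomy/eigenvalue argument as you do) to produce a length-minimizing closed geodesic with a perpendicular unit parallel field, and then applies Lemma \ref{Shorter} to contradict minimality. Your explicit treatment of the parallel-transport determinant and parity bookkeeping is exactly the content of the cited classical step, so there is no gap.
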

 
 \begin{proof}
The argument of Synge proceeds by contradiction and shows that if the topological conclusions do not hold then there is a closed geodesic with a parallel field which minimizes  length in its homotopy class, see e.g. Theorem 26 of \cite{Petersen}.   Applying Lemma \ref{Shorter} then  gives the desired contradiction in the weighted setting. 
 \end{proof}

\section{Diameter Estimate}

Now we prove the diameter estimate in the introduction.  We could give a proof of the result using the traditional second variation of energy argument and the formula from the previous section.  However, we will give a quicker  proof using the Bochner formula.  From formula (\ref{Ric1}) above we have 
\[   \partial_r  (\Delta_X r )= -|\mathrm{Hess} r|^2 - \mathrm{Ric}_X (\partial_r, \partial_r). \]

We can modify this equation to obtain an equation for $\mathrm{Ric}_X^{-(n-1)}$ in the following way:

\begin{lemma} \label{NegBochner}  Let $\gamma$ be a geodesic and let  $v = e^{\frac{f_{\gamma}}{n-1}}$.  Then 
\[ \partial_r (v^2 \Delta_X r) \leq  -v^2 \frac{(\Delta_X r)^2}{n-1} - v^2 \mathrm{Ric}_X^{-(n-1)}(\partial_r, \partial_r)\]
\end{lemma}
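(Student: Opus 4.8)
The plan is to differentiate $v^2\,\Delta_X r$ directly, substitute the Bochner identity (\ref{Ric1}), and then reduce the whole inequality to a single pointwise Cauchy–Schwarz estimate. First I would record the two elementary facts about $v = e^{f_\gamma/(n-1)}$ that drive the computation. Since $\partial_r f_\gamma = g(X,\partial_r)$ along the unit-speed geodesic $\gamma$, we get $\partial_r(v^2) = \frac{2g(X,\partial_r)}{n-1}\,v^2$, and by the very definition of the drift Laplacian $\Delta_X r = \Delta r - g(X,\partial_r)$, so that $\Delta_X r + g(X,\partial_r) = \Delta r$. Applying the product rule and (\ref{Ric1}) then yields
\[ \partial_r\!\left(v^2\,\Delta_X r\right) = v^2\left( \frac{2g(X,\partial_r)}{n-1}\,\Delta_X r - |\mathrm{Hess}\, r|^2 - \mathrm{Ric}_X(\partial_r,\partial_r)\right). \]

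Next I would rewrite the target inequality. Using $\mathrm{Ric}_X^{-(n-1)} = \mathrm{Ric}_X + \frac{1}{n-1}\,X^\sharp\otimes X^\sharp$ (the sign of $N=-(n-1)$ flips the last term to a plus, and it evaluates to $\frac{g(X,\partial_r)^2}{n-1}$ on $\partial_r$), the claimed bound becomes, after cancelling the common term $-v^2\,\mathrm{Ric}_X(\partial_r,\partial_r)$ and dividing by $v^2>0$, the purely pointwise statement
\[ |\mathrm{Hess}\, r|^2 \geq \frac{1}{n-1}\left((\Delta_X r)^2 + 2\,g(X,\partial_r)\,\Delta_X r + g(X,\partial_r)^2\right) = \frac{1}{n-1}\left(\Delta_X r + g(X,\partial_r)\right)^2. \]
By the identity above, the right-hand side is exactly $\tfrac{(\Delta r)^2}{n-1}$. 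The conceptual point is that completing the square in the drift terms converts the ``wrong'' drift trace $\Delta_X r$ back into the genuine Laplacian $\Delta r$; this is precisely the algebraic mechanism that lets the negative synthetic dimension $N=-(n-1)$ work.

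Finally I would close with Cauchy–Schwarz. Because $r$ is a distance function, $\mathrm{Hess}\, r$ annihilates the radial direction and acts as a symmetric operator on the $(n-1)$-dimensional orthogonal complement of $\partial_r$, where its trace equals $\Delta r$ and $|\mathrm{Hess}\, r|^2$ is its full Hilbert–Schmidt norm; hence $|\mathrm{Hess}\, r|^2 \geq \tfrac{(\Delta r)^2}{n-1}$, which is exactly what is needed. I expect the only real subtlety to be bookkeeping rather than any genuine difficulty: keeping the sign of $N=-(n-1)$ straight so the $X^\sharp\otimes X^\sharp$ term lands with the correct sign, and recognizing that the dimensional constant is $n-1$ rather than $n$, which is forced by the degeneracy of $\mathrm{Hess}\, r$ in the radial direction. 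Everything else is the standard product-rule computation.
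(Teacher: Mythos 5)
Your proposal is correct and is essentially the paper's own proof: both differentiate $v^2\Delta_X r$ by the product rule, substitute (\ref{Ric1}), and reduce everything to the Cauchy--Schwarz bound $|\mathrm{Hess}\, r|^2 \geq \frac{(\Delta r)^2}{n-1}$ together with the identity $\Delta r = \Delta_X r + g(X,\partial_r)$ that converts the cross terms into the $\frac{1}{n-1}X^\sharp\otimes X^\sharp$ correction in $\mathrm{Ric}_X^{-(n-1)}$. The only difference is presentational: the paper applies Cauchy--Schwarz inside a forward chain of inequalities, whereas you rearrange the target inequality first and then invoke it.
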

\begin{proof}
We have  
\begin{eqnarray*}
  \partial_r  (v^2 \Delta_X r ) &=& \left(  -|\mathrm{Hess} r|^2 - \mathrm{Ric}_X (\partial_r, \partial_r) + \frac{2 \partial_r f}{n-1} \Delta_X r \right) v^2 \\
  &\leq& \left( - \frac{(\Delta r)^2}{n-1}  + \frac{2 \partial_r f}{n-1} \Delta_X r - \mathrm{Ric}_X (\partial_r, \partial_r) \right)v^2 \\
  &=& \left(  - \frac{(\Delta_X r)^2}{n-1} -  \mathrm{Ric}^{-(n-1)}_X (\partial_r, \partial_r) \right)v^2 
  \end{eqnarray*}
  \end{proof}

This now gives us Theorem \ref{DiamIntro}. 

\begin{proof}[Proof of Theorem \ref{DiamIntro}]
Let $\gamma(r)$ be a minimizing unit speed geodesic  and let $\lambda(r) =  \frac{v^2 \Delta_X r}{n-1}$, then Lemma \ref{NegBochner} tells us that 
\[ \dot{\lambda} \leq -\frac{\lambda^2}{v^2} - k v^2 \] 
when $\mathrm{Ric}_X^{-(n-1)} \geq (n-1)  k$.  Let $v_{max}$ and $v_{min}$ be the minimum and maximum of $v$. .  Then we have 
\[ \dot{\lambda} \leq -\frac{\lambda^2}{v_{max}^2} - k v_{min}^2 \] 
This implies by the Ricatti comparison that 
\[ \lambda(t) \leq v_{\min} v_{\max} \sqrt{k} \cot\left( \frac{v_{\min}}{v_{\max}} \sqrt{k} t \right). \]
Since the right hand side goes to $-\infty$ at $t _0= \frac{v_{max}}{v_{min}} \cdot \frac{\pi}{\sqrt{k}}$,  $\lambda$ must go to $-\infty$ at some earlier time, meaning the geodesic will not be minimizing past $t_0$. 
\end{proof} 

\section{Pinching}
In this section we present the  proof of Theorem \ref{PinchingIntro}. 
What we will show is that the conjugate radius  and second variation estimates we already have combined with classical methods  give  a proof that any such manifold is  a homotopy sphere.   We will go into less detail in many of the arguments in this section and instead reference the textbooks \cite{doC, Petersen, Kling}. 

For submanifolds $A$ and $B$ in $M$ define the path space as  
 \[ \Omega_{A,B}(M) = \{ \gamma:[0,1] \rightarrow M, \gamma(0) = A, \gamma(1) = B \}
 \]  
 We consider the Energy $E:  \Omega_{A,B}(M) \rightarrow \mathbb{R}$ and  variation fields tangent to $A$ and $B$ at the end points. The  critical points are then  the geodesics perpendicular to $A$ and $B$ and  we say that the index of such a geodesic is $\geq k$ if there is a $k$-dimensional space of variation fields along the geodesic which have negative second variation. The first step in the proof is that  the diameter estimate in the previous section can be improved to an  index estimate in the case of a sectional curvature bound. 
 
 \begin{lemma} \label{lemmaindex}
 Suppose that $\overline{\sec}_f \geq k$, then if $\gamma$ is a geodesic of length longer than $ \frac{u_{\max}}{u_{\min}} \cdot \frac{\pi}{\sqrt{k}} $ than the index of $\gamma$ is greater than or equal to $(n-1)$. 
 \end{lemma}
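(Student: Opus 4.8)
The plan is to adapt the classical Morse-theoretic index estimate (as in Theorem 26 of \cite{Petersen}) to the weighted index form (\ref{2vfb}), using test fields built from a parallel frame modulated by a sine factor and by the weight $u = e^{f_\gamma}$. I parametrize $\gamma$ by arclength on $[0,L]$ with $L > \frac{u_{\max}}{u_{\min}}\frac{\pi}{\sqrt{k}}$, and choose a parallel orthonormal frame $E_1,\dots,E_{n-1}$ of $\gp^{\perp}$ along $\gamma$. The candidate $(n-1)$-dimensional negative space is spanned by
\[ V_i(t) = e^{f_\gamma(t)}\sin\!\Big(\tfrac{\pi t}{L}\Big)E_i(t), \qquad i = 1,\dots,n-1. \]
Each $V_i$ vanishes at $t=0$ and $t=L$, so they are admissible variation fields for the index form (being zero at the endpoints, they are in particular tangent to any boundary submanifolds), and the boundary term $g(\gp,X)|V_i|^2\big|_0^L$ in (\ref{2vfb}) vanishes.

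The key point is that the factor $u = e^{f_\gamma}$ is exactly what absorbs the drift term $g(\gp,X)V$, mirroring the use of $Y = e^f V$ in the proof of Lemma \ref{Shorter}. Writing $w(t) = \sin(\pi t/L)$ and using $g(\gp,X) = u'/u$ (immediate from $u = e^{f_\gamma}$ and $f_\gamma' = g(X,\gp)$), for any field $V = u w \sum_i c_i E_i$ the parallelism of the $E_i$ gives $\dot V - g(\gp,X)V = u w' \sum_i c_i E_i$. Hence
\[ |\dot V - g(\gp,X)V|^2 = u^2 (w')^2 \textstyle\sum_i c_i^2, \qquad \overline{R}^{\gp}_X(V,V) = u^2 w^2\, \overline{R}^{\gp}_X\big(\textstyle\sum_i c_iE_i, \sum_i c_iE_i\big) \geq k\,u^2 w^2 \textstyle\sum_i c_i^2, \]
where the inequality applies the hypothesis $\overline{\sec}_f \geq k$ to the unit vector in the direction $\sum_i c_i E_i$. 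Substituting into (\ref{2vfb}) collapses the index form to $I_{[0,L]}(V,V) \leq \big(\sum_i c_i^2\big)\int_0^L u^2\big((w')^2 - k w^2\big)\,dt$.

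It then remains only to bound the integral. Estimating $u^2 \leq u_{\max}^2$ on the first term and $u^2 \geq u_{\min}^2$ on the subtracted second term, and evaluating $\int_0^L (w')^2\,dt = \pi^2/(2L)$ and $\int_0^L w^2\,dt = L/2$ for $w = \sin(\pi t/L)$, yields
\[ I_{[0,L]}(V,V) \leq \Big(\textstyle\sum_i c_i^2\Big)\,\frac{1}{2L}\Big(u_{\max}^2 \pi^2 - k\,u_{\min}^2 L^2\Big), \]
which is strictly negative for every nonzero $(c_i)$ precisely when $L > \frac{u_{\max}}{u_{\min}}\frac{\pi}{\sqrt{k}}$. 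Thus the index form is negative definite on the $(n-1)$-dimensional span of $V_1,\dots,V_{n-1}$, giving index $\geq (n-1)$.

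I expect the only genuinely substantive step to be recognizing the correct test field, namely inserting the weight $e^{f_\gamma}$ so that the cross term $g(\gp,X)V$ cancels and the index form reduces to the clean expression $\int u^2\big((w')^2 - kw^2\big)\,dt$; once that is in place, the remaining estimates are the standard sine computation. A minor point to check carefully is that negative-definiteness holds uniformly over all linear combinations and not merely the individual $V_i$, which follows because the bound $\overline{\sec}_f \geq k$ holds for every unit direction perpendicular to $\gp$ and hence applies to $\sum_i c_i E_i / |\sum_i c_i E_i|$ as well.
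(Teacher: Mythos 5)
Your proof is correct, and its central device---test fields of the form $\phi\, e^{f_\gamma} E$ with $E$ parallel, chosen precisely so that $\dot V - g(\gp,X)V = \dot\phi\, e^{f_\gamma} E$ and the drift term in (\ref{2vfb}) is absorbed---is exactly the paper's. Where you genuinely diverge is the endgame. The paper takes $\phi$ to be the solution $\psi$ of $\ddot\psi + 2\dot f\dot\psi + k\psi = 0$, $\psi(0)=0$, $\dot\psi(0)=1$, locates its first zero via the substitution $\lambda = e^{2f}\dot\psi/\psi$ and the same Riccati comparison $\dot\lambda \le -\lambda^2/u_{\min}^2 - k\,u_{\max}^2$ used in Theorem \ref{DiamIntro}, and then extends $\psi$ by zero to get $(n-1)$ broken fields with $\frac{d^2E}{ds^2}\big|_{s=0} \le 0$. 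You instead plug in the explicit function $\phi = \sin(\pi t/L)$ and estimate $\int_0^L u^2\bigl((\dot\phi)^2 - k\phi^2\bigr)\,dt$ by placing $u_{\max}$ on the positive term and $u_{\min}$ on the subtracted one, arriving at the same threshold $L > \frac{u_{\max}}{u_{\min}}\frac{\pi}{\sqrt{k}}$. Your route is more elementary (no ODE, no Riccati comparison) and has a technical advantage: it produces an $(n-1)$-dimensional space on which the index form is \emph{strictly} negative definite, matching the paper's definition of index directly, whereas the paper's broken fields only satisfy $I \le 0$ and strict negativity is left to the standard smoothing/Morse-theoretic facts in the cited references. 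What the paper's method buys in exchange is uniformity of machinery---the identical Riccati comparison runs through Theorem \ref{Conjugate Radius}, Theorem \ref{DiamIntro}, and this lemma---and, in principle, sharper information, since the first zero of $\psi$ tracks the actual profile of $f$ along $\gamma$ rather than only the ratio of its extreme values.
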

 \begin{proof}
Along a geodesic $\gamma:[0,l] \rightarrow M$  with a proper variation, $V$,   the second variation formula (\ref{2vfb})  becomes 
\[   \frac{d^2E}{ds^2} |_{s=0} =   \int_{0}^l |\dot{V} - g(\gp, X)V|^2  - \overline{R}^{\gp } _X(V,V) dt \]    Choose  $E$ to be a unit length parallel field along $\gamma$ such that $E \perp \gp$, let  $\phi(t)$ be a function such that   $\phi(0) = 0$ and $\phi(l)=0$,  and let $V = \phi e^{f} E. $
Then we have
\[ \dot{V}  - g(\gp, X)V  = \dot{\phi} e^f E \]
Plugging $V$ into the second variation formula then gives 
\begin{eqnarray*}
  \frac{d^2E}{ds^2} |_{s=0}& =&    \int_{0}^le^{2f} \left( (\dot{\phi})^2  -  \phi^2\overline{R}^{\gp } _X(V,V)\right) dt \\
  &=& - \int_{0}^le^{2f} \left( \ddot{\phi} \phi + 2\dot{f}\dot{\phi} \phi   +  \phi^2 \overline{R}^{\gp } _X(V,V)\right) dt \\
  &\leq& - \int_{0}^le^{2f} \phi  \left( \ddot{\phi} + 2\dot{f}\dot{\phi}  +  k\phi  \right)dt 
 \end{eqnarray*}  Let $\psi$ be the solution to 
 \begin{eqnarray*} \label{BVE}  \ddot{\psi} + 2\dot{f}\dot{\psi}  + k\psi = 0 \qquad \psi(0) = 0 \qquad  \dot{\psi}(0) = 1. \end{eqnarray*}
 and let $L$ be the smallest positive number such that $\psi(L) = 0$.  If we show that $L \leq \frac{u_{\max}}{u_{\min}} \cdot \frac{\pi}{\sqrt{k}}$ then this will imply the result since we can then  construct $(n-1)$ linearly independent fields along $\gamma$ with $  \frac{d^2E}{ds^2} |_{s=0} \leq 0$ by taking the fields $V$ as above and defining $\phi = \psi$ on $[0,L]$ and $\phi(t) = 0$ for $t \in \left [L, \frac{u_{\max}}{u_{\min}} \cdot \frac{\pi}{\sqrt{k}}\right]$.  
 
 To see that  $L \leq \frac{u_{\max}}{u_{\min}} \cdot \frac{\pi}{\sqrt{k}}$, let $\lambda = \frac{e^{2f} \dot{\phi}}{\phi}$.  Then a simple calculation shows that 
  \[ \dot{\lambda} =  - \frac{\lambda^2}{u^2} -ku^2\]
The Ricatti comparison applied as in the proof of Theorem \ref{DiamIntro} then gives the result 
 
 \end{proof} 
 
 This index estimate  gives the following generalization of a sphere theorem of Berger  \cite{Berger1} which is  Theorem 33 in  \cite{Petersen}. 
 
 \begin{theorem} \label{BergerHomotopy}  If a compact Riemannian manifold has $\overline{\sec}_f  \geq k$ and 
 \[ \mathrm{inj}(M,g) \geq \frac{u_{\max}}{u_{\min}} \frac{\pi}{2\sqrt{k}} \]
 Then $M$ is a homotopy sphere. 
 \end{theorem}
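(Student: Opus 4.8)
The plan is to run the classical Morse-theoretic argument of Berger on the path space, using our weighted index estimate (Lemma \ref{lemmaindex}) as the only geometric input in place of the classical one. Write $A = \frac{u_{\max}}{u_{\min}}\frac{\pi}{\sqrt{k}}$. First I would record two consequences of the hypotheses. Since any geodesic longer than $A$ has index at least $n-1$ by Lemma \ref{lemmaindex}, it is not minimizing; hence every minimal geodesic has length at most $A$, so $\mathrm{diam}(M) \leq A$. The injectivity radius hypothesis reads $\mathrm{inj}(M,g) \geq A/2$, so $2\,\mathrm{inj}(M,g) \geq A \geq \mathrm{diam}(M)$. This matching of constants, arranged precisely by the factor $\frac{u_{\max}}{u_{\min}}$ appearing in both Lemma \ref{lemmaindex} and the hypothesis, is what makes the classical argument go through verbatim.

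Next I would fix points $p,q$ realizing the diameter, $d(p,q) = \mathrm{diam}(M) = D$, and study the energy functional $E$ on the path space $\Omega_{p,q}(M)$, whose critical points are the geodesics from $p$ to $q$ and whose Morse index at such a geodesic equals its fixed-endpoint index (the variation fields in Lemma \ref{lemmaindex} vanish at the endpoints, so its index estimate is exactly the Morse index on $\Omega_{p,q}(M)$). The two facts I need are: (i) the minimal geodesics form the minimum of $E$, of index $0$; and (ii) every non-minimal geodesic from $p$ to $q$ has length at least $2\,\mathrm{inj}(M,g) \geq A$, so by Lemma \ref{lemmaindex} its index is at least $n-1$. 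Statement (ii) is the crux: it is here that the injectivity radius enters, via Klingenberg's long homotopy lemma together with the maximality of $d(p,q)$, to rule out short non-minimal geodesics between diametral points (cf. \cite{Kling61, Berger1, Petersen}). Granting (i) and (ii), Morse theory shows $\Omega_{p,q}(M)$ has the homotopy type of a complex built from the minimum by attaching cells only in dimensions $\geq n-1$, so $\Omega_{p,q}(M)$ is $(n-2)$-connected.

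Finally I would translate this back to $M$. Since $M$ is connected, $\Omega_{p,q}(M)$ is homotopy equivalent to the based loop space $\Omega_p M$, for which $\pi_i(\Omega_p M) \cong \pi_{i+1}(M)$; thus $\pi_j(M) = 0$ for $1 \leq j \leq n-1$. In particular $M$ is simply connected, so the theorem needs no separate hypothesis on $\pi_1$, and $(n-1)$-connected, whence $H_\ast(M)$ agrees with that of $S^n$ and a degree-one map $S^n \to M$ is a homotopy equivalence by Hurewicz and Whitehead, i.e.\ $M$ is a homotopy sphere. The main obstacle is step (ii), together with the technical care needed in the Morse theory: the endpoint $q$ may be conjugate to $p$, so $E$ need not be a Morse function, and the set of minimal geodesics need not reduce to a point (on the round sphere it is an $S^{n-1}$). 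Both must be handled by the standard devices of the classical sphere theorem — perturbing the endpoint or analyzing sublevel sets of $E$ directly — while checking throughout that the weighted threshold $A$, rather than the Riemannian $\frac{\pi}{\sqrt{k}}$, is the relevant scale.
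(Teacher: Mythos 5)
Your overall Morse--theoretic strategy and your use of Lemma \ref{lemmaindex} match the paper's intent, but your step (ii) is a genuine gap, and it is precisely the step the paper's proof is structured to avoid. The paper works in the based loop space $\Omega_{p,p}$ for a single point $p$, not in $\Omega_{p,q}$ for diametral points. The reason is that for loops the required length bound is elementary: if a non-constant geodesic loop at $p$ had length $L < 2\,\mathrm{inj}(M,g)$, both of its halves would be geodesics of length $L/2 < \mathrm{inj}$ from $p$ to the midpoint $m$, hence each would be \emph{the} unique minimal geodesic from $p$ to $m$; they would therefore coincide, forcing the loop to double back on itself, which is impossible for a geodesic. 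So every non-constant geodesic loop has length $\geq 2\,\mathrm{inj} \geq \frac{u_{\max}}{u_{\min}}\frac{\pi}{\sqrt{k}}$, Lemma \ref{lemmaindex} applies to every non-constant critical point of $E$ on $\Omega_{p,p}$, and since the minimum of $E$ there is the single constant loop, the cited Morse theory gives $(n-1)$-connectedness with no further geometric input.

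For distinct endpoints this elementary argument only shows that a geodesic from $p$ of length less than $\mathrm{inj}$ is minimizing, i.e.\ non-minimal geodesics from $p$ to $q$ have length $\geq \mathrm{inj} \geq \frac{u_{\max}}{u_{\min}}\frac{\pi}{2\sqrt{k}}$ --- a factor of $2$ short of the threshold in Lemma \ref{lemmaindex}. The tools you invoke cannot close this factor of $2$. Klingenberg's long homotopy lemma presupposes a conjugate-radius bound, which comes from an \emph{upper} curvature bound; the present theorem assumes only $\overline{\sec}_f \geq k$ (contrast Theorem \ref{InjEst}, where two-sided bounds are available), and in any case that lemma bounds the length of curves occurring in a homotopy between two geodesics, not the length of a geodesic itself. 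Berger's lemma on diameter-realizing points yields angle conditions on minimal geodesics, not lower bounds on lengths of non-minimal ones. There is also the second difficulty you flag but cannot wave away: Morse theory on $\Omega_{p,q}$ attaches cells of dimension $\geq n-1$ to the \emph{set} of minimal geodesics, so $(n-2)$-connectedness requires control of that minimum set (it is an $S^{n-1}$ for antipodal points on the round sphere, and it is disconnected for diametral points in $\mathbb{RP}^n$, which satisfies the borderline classical hypotheses); perturbing $q$ to make $E$ Morse destroys the diametral property on which your step (ii) was supposed to rest. Both problems disappear simultaneously if you take $q=p$, which is exactly what the paper does.
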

 
 \begin{proof} 
 Under the hypothesis, every geodesic loop $\gamma$ such that $\gamma(0) = \gamma(l)= p$ must have length greater than or equal to $\frac{u_{max}}{u_{min}} \frac{\pi}{2\sqrt{k}}$. Then Lemma \ref{lemmaindex} implies that every geodesic in $\Omega_{p,p}$ has index greater than or equal to $(n-1)$.    This then implies that $M$ is $(n-1)$ connected and thus a homotopy sphere see Theorems 32 and 33 of \cite{Petersen} along with  Theorem 2.5.16 of \cite{Kling}. 
 \end{proof}
 
This shows that the key to proving a sphere theorem is to prove an injectivity radius estimate.     In the even dimensional case an injectivity radius estimate  follows from  Theorem \ref{Conjugate Radius} and Lemma \ref{Shorter}. 
\begin{theorem}
Suppose that $M$ is a compact  even dimensional simply connected manifold   such that  $0 \leq \overline{\sec}_f\leq L$  then  $\mathrm{inj}(M,g) \geq  \frac{u_{min}}{u_{max}} \frac{\pi}{\sqrt{L}}. $
\end{theorem}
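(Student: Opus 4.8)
The plan is to reproduce Klingenberg's classical injectivity radius argument for even-dimensional positively curved manifolds, feeding in the weighted conjugate radius estimate in place of the usual one and the weighted Synge lemma in place of the classical shortening. First I would use only the upper bound $\overline{\sec}_f \leq L$: applying Theorem \ref{Conjugate Radius} with $K=L$ along every unit speed geodesic shows that any two conjugate points are at distance at least $\frac{u_{\min}}{u_{\max}}\cdot\frac{\pi}{\sqrt{L}}$, so the conjugate radius of $(M,g)$ is bounded below by this quantity. It then remains to rule out the possibility that $\mathrm{inj}(M,g)$ is strictly smaller than $\frac{u_{\min}}{u_{\max}}\cdot\frac{\pi}{\sqrt{L}}$ and is realized by a short geodesic loop rather than by a conjugate point.

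Second, I would invoke Klingenberg's injectivity radius lemma (see \cite{Kling61} and the expositions in \cite{doC, Petersen, Kling}). This lemma concerns the Riemannian metric $g$ alone and so applies verbatim in the weighted setting: if $\mathrm{inj}(M,g)$ is strictly less than the conjugate radius, then there is a closed geodesic $\gamma$ of length exactly $2\,\mathrm{inj}(M,g)$ which realizes the shortest length among all closed geodesics of $M$. Combining this with the first step, a hypothetical failure $\mathrm{inj}(M,g) < \frac{u_{\min}}{u_{\max}}\cdot\frac{\pi}{\sqrt{L}}$ produces a shortest closed geodesic $\gamma$, each of whose halves is shorter than the conjugate radius and hence contains no conjugate points.

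Third, I would run Synge's orientability argument on $\gamma$. Since $M$ is simply connected it is orientable, and since $n=\dim M$ is even the normal space $\gp^{\perp}$ along $\gamma$ is odd-dimensional; parallel transport once around the closed geodesic $\gamma$ is then an orientation-preserving linear isometry of an odd-dimensional space and hence fixes a nonzero vector, producing a unit parallel field $E\perp\gp$ along $\gamma$. Feeding $E$ into Lemma \ref{Shorter} in its $X=\nabla f$, $\overline{\sec}_f$ form yields a closed curve homotopic to $\gamma$ of strictly smaller length, contradicting the minimality of $\gamma$. This contradiction forces $\mathrm{inj}(M,g)\geq \frac{u_{\min}}{u_{\max}}\cdot\frac{\pi}{\sqrt{L}}$.

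The step I expect to be most delicate is the last one, because of the non-strictness of the curvature hypothesis. Lemma \ref{Shorter} is stated for $\overline{\sec}_f>0$, whereas here I only assume $\overline{\sec}_f\geq 0$, so the second variation computed from the field $Y=e^fE$ is a priori only $\leq 0$ rather than $<0$. To close the argument I would analyze the equality case: if no strict shortening occurs, then $\overline{\sec}_f^{\gp}(E)$ must vanish identically along $\gamma$, and I would argue that this, together with the absence of conjugate points on the two halves of $\gamma$ guaranteed by the second step, is incompatible with $\gamma$ being a \emph{strictly} shortest closed geodesic. I regard reconciling this borderline case as the crux of the proof; note that in the pinching application of Theorem \ref{PinchingIntro} the lower curvature bound is in fact strictly positive, so there the subtlety does not arise and the three steps above close immediately.
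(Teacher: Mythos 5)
Your first two steps coincide with the paper's proof: the upper bound $\overline{\sec}_f \leq L$ fed into Theorem \ref{Conjugate Radius} bounds the conjugate radius below by $\frac{u_{\min}}{u_{\max}}\frac{\pi}{\sqrt{L}}$, and Klingenberg's lemma then produces, under the assumption $\mathrm{inj}(M,g) < \frac{u_{\min}}{u_{\max}}\frac{\pi}{\sqrt{L}}$, a closed geodesic $\gamma$ of length $2\,\mathrm{inj}(M,g)$; the Synge parallel field and Lemma \ref{Shorter} are also exactly the paper's tools. The genuine gap is your final contradiction. You conclude that a closed curve homotopic to $\gamma$ of strictly smaller length ``contradicts the minimality of $\gamma$.'' But $\gamma$ is minimal among closed \emph{geodesics}, and the curve produced by Lemma \ref{Shorter} is not a geodesic; moreover, since $M$ is simply connected, the free homotopy class of $\gamma$ is trivial, so you cannot minimize length in that class to convert the shorter curve into a shorter closed geodesic --- the infimum there is $0$, realized by constant curves. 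This is precisely why the classical even-dimensional Klingenberg argument does not stop at the Synge shortening: one must show that a length-decreasing homotopy of a closed geodesic of length $2\,\mathrm{inj}$, with $\mathrm{inj}$ strictly less than the conjugate radius, is itself impossible, by lifting the shorter curves of the homotopy through the exponential map (nonsingular on balls of radius up to the conjugate radius) and deriving a contradiction --- in the paper's words, the shortening ``leads to conjugate points of smaller distance apart,'' contradicting Theorem \ref{Conjugate Radius}. The paper cites the proof of Theorem 30 of \cite{Petersen} for exactly this step; your proposal omits it, and without it the argument does not close.

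A secondary remark: you are right to flag that Lemma \ref{Shorter} is stated for $\overline{\sec}_f > 0$ while the hypothesis here allows $\overline{\sec}_f = 0$; the paper's own proof applies Lemma \ref{Shorter} without comment on this point, so your observation is sharper than the text. However, your proposed repair is not an argument: from vanishing of the second variation you can indeed conclude $\overline{\sec}_f^{\gp}(E) \equiv 0$ along $\gamma$, but that alone produces no shorter curve, and ``incompatible with $\gamma$ being a strictly shortest closed geodesic'' is again an appeal to the minimality that, as explained above, is not the right quantity to contradict. If you instead run the argument under the strict hypothesis $\overline{\sec}_f > 0$ (which is what Lemma \ref{Shorter} needs, and what the Petersen-style lifting argument consumes), the borderline issue disappears, but the missing exponential-map step is still required to finish.
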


\begin{proof}
Suppose that $\mathrm{inj}(M,g)< \frac{u_{min}}{u_{max}} \frac{\pi}{\sqrt{L}}$.  Then from Theorem \ref{Conjugate Radius} the  conjugate radius is larger than the injectivity radius.  This tells us that   there is a closed geodesic of length $\frac12\mathrm{inj}_M$.  From the proof of  Synge's  theorem, when the manifold is orientable and even-dimensional it is possible to construct a parallel field along the geodesic and from Lemma \ref{Shorter}  there is a variation which decreases the length of this closed curve.  However, it is possible to show that this leads to conjugate points of smaller distance apart, a contradiction, see the proof of  Theorem 30 of \cite{Petersen}. 
 \end{proof} 
 
The odd dimensional case is more difficult where the injectivity radius estimate is due to Klingenberg in the classical case.  However, from what we have already proved,  Klingenberg's  arguments carry over to the weighted setting.  First we have the homotopy lemma.

    \begin{lemma} [Klingenberg's homotopy lemma] Suppose that a Riemannian manifold $(M,g)$ has the property that no geodesic segment of length less than $\pi$ contains a conjugate point. Suppose that $p, q \in M$ such that $p$ and $q$ are joined by two distinct geodesics  $\gamma_0$ and $\gamma_1$ which are homotopic.  Then there exists a curve in the homotopy $\alpha_{t_0}$ such that 
    \[ \mathrm{length}(\alpha_{t_0}) \geq 2\pi - \mathrm{min}\{ \mathrm{length}(\gamma_i)\} \]
    \end{lemma}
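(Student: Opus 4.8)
The plan is to run Klingenberg's classical long homotopy argument essentially verbatim, since the hypothesis ``no geodesic segment of length less than $\pi$ contains a conjugate point'' is exactly the statement that, for every $p$, the map $\exp_p$ is a local diffeomorphism on the open ball $B(0,\pi)\subset T_pM$; this is the only geometric input, and in the present setting it is what Theorem \ref{Conjugate Radius} supplies after normalizing the conjugate radius to $\pi$. Fix a homotopy $\{\alpha_s\}_{s\in[0,1]}$ rel endpoints with $\alpha_0=\gamma_0$ and $\alpha_1=\gamma_1$, write $\ell_i=\mathrm{length}(\gamma_i)$, and assume without loss of generality $\ell_0\le\ell_1$. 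First I would dispose of the trivial cases: if $\ell_0\ge\pi$ then $2\pi-\ell_0\le\pi\le\ell_0=\mathrm{length}(\alpha_0)$, so $\alpha_0=\gamma_0$ already satisfies the bound, and if $\ell_1\ge 2\pi-\ell_0$ then $\alpha_1=\gamma_1$ does. Hence I may assume $\ell_0<\pi$ and $\ell_1<2\pi-\ell_0$, and argue by contradiction, supposing $\mathrm{length}(\alpha_s)<2\pi-\ell_0$ for every $s$.

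The central construction is a lift of the homotopy through $\exp_p$. Endow $D=B(0,\pi)$ with the pulled-back metric $\exp_p^{\ast}g$, so that $\exp_p\colon D\to M$ is a local isometry; by the Gauss lemma the radial rays from the origin are minimizing geodesics of $D$, and the $D$-distance from $0$ to any $w$ equals the norm $|w|$. Because $\exp_p$ is a local isometry, lifts of curves issuing from $p$ are unique where they exist and preserve length. Let $v_i$ be the radial segment lifting $\gamma_i$, so $v_i$ has norm $\ell_i$; note $v_0\in D$ since $\ell_0<\pi$, and $v_0\ne v_1$ because $\gamma_0\ne\gamma_1$. I would then study the set $S$ of parameters $s$ for which $\alpha_s$ lifts to a curve $\tilde\alpha_s$ in $D$ with $\tilde\alpha_s(0)=0$ and $\tilde\alpha_s(1)=v_0$, and show that $S$ is nonempty (clearly $0\in S$), open, and closed, hence all of $[0,1]$.

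The crux, which I expect to be the main obstacle, is showing that the length bound prevents any lift ending at $v_0$ from reaching the boundary sphere $|w|=\pi$; this is precisely what makes $S$ simultaneously open and closed. If some lift $\tilde\alpha_s$ ending at $v_0$ reached radius $\pi$ at an interior parameter, then, measuring length in the pullback metric and combining the identity ``$D$-distance from the origin equals the norm'' with the triangle inequality, I would obtain
\[ \mathrm{length}(\alpha_s)=\mathrm{length}_D(\tilde\alpha_s)\ \ge\ \pi+(\pi-\ell_0)=2\pi-\ell_0, \]
contradicting the standing assumption; here $\pi$ is the distance out to the radius-$\pi$ point and $\pi-\ell_0$ bounds below the distance from that point back to $v_0$. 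Thus every lift stays in a fixed compact ball of $D$, and the endpoint $\tilde\alpha_s(1)$, lying in the discrete set $\exp_p^{-1}(q)\cap D$ and varying continuously in $s$, is locally constant and remains equal to $v_0$; this yields openness and closedness of $S$.

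Finally, $S=[0,1]$ forces $1\in S$, i.e.\ $\gamma_1$ lifts within $D$ to a curve ending at $v_0$. But the unique lift of the geodesic $\gamma_1$ is its radial segment: if $\ell_1<\pi$ this ends at $v_1\ne v_0$, while if $\ell_1\ge\pi$ it leaves $D$ so that no lift in $D$ exists. Either way $1\in S$ is impossible, so the contradiction hypothesis fails and some $\alpha_{t_0}$ satisfies $\mathrm{length}(\alpha_{t_0})\ge 2\pi-\ell_0=2\pi-\min\{\mathrm{length}(\gamma_i)\}$, as claimed. I would emphasize that, apart from inputting the conjugate-radius bound from Theorem \ref{Conjugate Radius}, this lemma is purely classical and uses none of the weighted machinery directly.
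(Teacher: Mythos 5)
Your proof is correct and is essentially the paper's own approach: the paper's ``proof'' is just the observation, with a citation to 2.6.5 of \cite{Kling}, that Klingenberg's classical long-homotopy argument uses only the conjugate-point hypothesis (i.e.\ that $\exp_p$ is a local diffeomorphism on $B(0,\pi)$), and your write-up is precisely that classical lifting argument spelled out. The one soft spot --- asserting that all lifts lie in a \emph{fixed} compact ball, whereas your triangle-inequality estimate only gives a per-curve radius bound, so one needs the lengths $\mathrm{length}(\alpha_s)$ to vary continuously in $s$ (automatic for the piecewise-smooth homotopies to which the lemma is applied) to get a uniform bound for the closedness step --- is glossed at exactly the same level in the classical sources the paper defers to.
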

    
\begin{proof} 
This is usually stated with the conjugate point estimate replaced with the condition $\mathrm{sec} \leq 1$.   However, as is pointed out in 2.6.5 of \cite{Kling}, the lemma holds with the same proof in this generality. 
\end{proof}

Now we can prove the injectivity radius estimate in all dimensions. 

\begin{theorem} \label{InjEst}
Suppose that $(M,g,f)$ is complete simply connected  and satisfies 
\[  \frac{1}{4}\left (\frac{u_{max}}{u_{min}} \right)^2 < L \leq \overline{\sec}_f \leq  \left(\frac{u_{min}}{u_{max}} \right)^2\] 
then $\mathrm{inj}(M,g)\geq \pi$
\end{theorem}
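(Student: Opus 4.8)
The plan is to run Klingenberg's classical injectivity radius argument in the weighted setting, using the conjugate radius estimate of Theorem \ref{Conjugate Radius} in place of the upper sectional curvature bound and a weighted Rauch-type spreading estimate in place of the lower bound. First I would exploit the upper bound. Applying Theorem \ref{Conjugate Radius} along any geodesic with $K = (u_{\min}/u_{\max})^2$ gives that consecutive conjugate points are at distance at least $\frac{u_{\min}}{u_{\max}}\cdot\frac{\pi}{\sqrt{K}} = \pi$, where I use that the along-geodesic ratio of $u = e^{f_\gamma}$ is bounded below by the global ratio $u_{\min}/u_{\max}$. Thus no geodesic segment of length less than $\pi$ contains a conjugate point, which is exactly the hypothesis of Klingenberg's homotopy lemma; this is what makes the bound $(u_{\min}/u_{\max})^2$ the right one to impose.

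Next I would argue by contradiction, assuming $\mathrm{inj}(M,g) < \pi$. Since the conjugate radius is at least $\pi > \mathrm{inj}(M,g)$, the classical characterization of the injectivity radius (Klingenberg's lemma, see \cite{Kling, doC, Petersen}) produces a smoothly closed geodesic $\gamma$ of length $2\,\mathrm{inj}(M,g) < 2\pi$ whose two halves are distinct minimal geodesics $\sigma_0,\sigma_1$ from $p = \gamma(0)$ to the antipodal point $q$, with $\dot{\sigma}_1(0) = -\dot{\sigma}_0(0)$. As $M$ is simply connected, $\sigma_0$ and $\sigma_1$ are homotopic rel endpoints, so Klingenberg's homotopy lemma forces every homotopy between them to contain a curve of length at least $2\pi - \mathrm{inj}(M,g) > \pi$.

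The contradiction then comes from producing one homotopy from $\sigma_0$ to $\sigma_1$ all of whose curves are strictly shorter than $2\pi - \mathrm{inj}(M,g)$. Following Klingenberg, I would rotate the initial velocity through a half great circle from $\dot{\sigma}_0(0)$ to $\dot{\sigma}_1(0) = -\dot{\sigma}_0(0)$ in the unit sphere of $T_pM$, take the geodesics of length $\mathrm{inj}(M,g)$ emanating in these directions, and cap each by a short geodesic from its endpoint back to $q$; the resulting broken geodesics interpolate between $\sigma_0$ and $\sigma_1$. Each such curve has length $\mathrm{inj}(M,g)$ plus the length of its cap, and the cap length is governed by how far the endpoints $\exp_p(\mathrm{inj}(M,g)\,w)$ spread apart as $w$ rotates. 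This is where the lower bound enters: one needs a weighted Rauch comparison bounding the growth of the perpendicular Jacobi fields that compute $d(\exp_p)$ under $\overline{\sec}_f \geq L$, with the conformal factors tracked exactly as in the Riccati comparisons already used for Theorems \ref{Conjugate Radius} and \ref{DiamIntro}. The pinching hypothesis $\frac14(u_{\max}/u_{\min})^2 < L$ is calibrated precisely so that the classical numerology, in which $\sec > 1/4$ limits the spreading enough to keep all these curves below the $2\pi - \mathrm{inj}(M,g)$ threshold, survives the weighting.

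I expect this weighted spreading estimate to be the main obstacle. Unlike the conjugate radius bound, which I already have in hand, controlling the caps requires an upper bound on Jacobi field growth from a lower bound on $\overline{\sec}_f$, and the bookkeeping of $u_{\max}/u_{\min}$ must be done so that the weighted constants line up with the classical ones. Once this estimate is established, the remaining combinatorics is exactly Klingenberg's classical argument as presented in \cite{Kling, doC, Petersen}, and the resulting bound $\mathrm{inj}(M,g) \geq \pi$ is what feeds, together with Theorem \ref{BergerHomotopy}, into the pinching theorem.
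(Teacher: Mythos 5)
Your first step---applying Theorem \ref{Conjugate Radius} with $K = (u_{\min}/u_{\max})^2$ to conclude that no geodesic segment of length less than $\pi$ contains a conjugate point, so that Klingenberg's homotopy lemma applies---is exactly the paper's first step. The gap is in what you do with the lower bound. Your contradiction rests on a ``weighted Rauch-type spreading estimate'' that would bound the cap lengths in the rotating-geodesics construction, and you yourself flag this estimate as unproven (``the main obstacle''). That estimate is the entire mathematical content of the second half of the argument, so as written there is no proof. It is also doubtful as stated: a lower bound on $\overline{\sec}_f$ controls the weighted quantity $e^{-2f_\gamma}|J|^2$ along a geodesic, not $|J|$ itself, and Remark \ref{RemTri} (the hemisphere and hyperbolic space both carry densities with $\overline{\sec}_f \equiv 0$) shows that weighted curvature bounds do not convert into distance or spreading comparisons the way classical bounds do; any such control must be routed through $u_{\max}/u_{\min}$ factors, which is precisely the bookkeeping you defer and for which the paper offers no tool.

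The paper's own proof sidesteps this entirely, using a lemma you already had available. In Klingenberg's argument as presented in \cite{doC} (Proposition 3.1, p.~276), the lower curvature bound is used only through one statement: every geodesic of length exceeding a threshold strictly below $2\pi$ has index at least $2$. The homotopy between the two halves of the short closed geodesic is then pushed below the critical length $2\pi - \mathrm{inj}(M,g)$ by Morse theory on the path space $\Omega_{p,q}(M)$, not by an explicit cap construction, so no pointwise Jacobi-field comparison is ever needed. The weighted version of exactly this statement is Lemma \ref{lemmaindex}, already proved via second variation with the test fields $V = \phi e^{f} E$: under $\overline{\sec}_f \geq L > \frac{1}{4}\left(\frac{u_{\max}}{u_{\min}}\right)^2$, every geodesic of length greater than $\frac{u_{\max}}{u_{\min}}\cdot\frac{\pi}{\sqrt{L}} < 2\pi$ has index at least $n-1 \geq 2$. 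Substituting Lemma \ref{lemmaindex} for your missing Rauch estimate and running the index/Morse-theoretic form of Klingenberg's proof is the paper's argument; the broader lesson is that in this weighted setting, integrated second-variation (index) estimates survive, while infinitesimal spreading comparisons generally do not.
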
 

\begin{proof} Since $\overline{\sec}_f \leq  \left(\frac{u_{\min}}{u_{\max}} \right)^2$ Theorem \ref{Conjugate Radius}   shows that the conjugate radius is less than or equal to $\pi$   so that  we can apply the homotopy lemma.  On the other hand, from  \ref{lemmaindex}, $ \overline{\sec}_f >   \frac{1}{4}\left (\frac{u_{max}}{u_{min}} \right)^2$  implies  that any geodesic of length longer than $\frac{\pi}{2}$ has index greater than or equal to $2$.  These are the only two elements about curvature used in the proof of the injectivity radius estimate of Klingenberg, see for example the proof of Proposition 3.1 on page 276  of \cite{doC}. 
\end{proof}

The proof of Theorem \ref{PinchingIntro} now follows as Theorem \ref{InjEst} and Theorem \ref{BergerHomotopy} showing the manifold is a homotopy sphere.

 \section{Killing Fields}
 
In this section we augment the previous considerations involving Jacobi fields and the second variation of energy formula by showing that the weighted sectional curvatures also come up naturally in formulas for Killing fields.   Recall that  for a Killing field   $V$  on a Riemannian manifold $(M,g)$ we have the following. 
 \begin{eqnarray}
\label{KF1} \frac12  \nabla \left( |V|^2\right) & =& - \nabla_V V \\
\label{KF2} \frac12  \mathrm{Hess}\left (|V|^2\right) (Y,Y) &=&  |\nabla_Y V|^2 - R(Y,V,V,Y)
 \end{eqnarray}
Now suppose we have a smooth manifold with smooth density  $(M,g, f)$ and consider the  function 
 \[ h= \frac{1}{2}e^{-2f} |V|^2. \] 
then we have the following formulas. 
 \begin{lemma} 
 Let $Y$ be a tangent vector, then 
 \begin{eqnarray*}
  \nabla h &=& -e^{-2f}\left( \nabla_V V + |V|^2 \nabla f \right)\\
 \mathrm{Hess} h(Y,Y)  &=&-  2df\otimes dh(Y,Y)+   |\nabla_Y (e^{-f} V)|^2\\
 && \quad  - e^{-2f} \left( R(V,Y,Y,V) + |V|^2 \mathrm{Hess} f(Y,Y) +  |V|^2df(Y)^2 \right)
 \end{eqnarray*}
 \end{lemma}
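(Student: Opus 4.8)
The plan is to treat $h=\tfrac12 e^{-2f}|V|^2$ as the product $h=e^{-2f}\,w$ with $w=\tfrac12|V|^2$, and to reduce everything to the classical Killing field identities (\ref{KF1}) and (\ref{KF2}) via the Leibniz rule for $\nabla$ and $\mathrm{Hess}$. For the gradient, the first-order Leibniz rule gives $\nabla h = e^{-2f}\nabla w + w\,\nabla(e^{-2f})$. By (\ref{KF1}) we have $\nabla w = -\nabla_V V$, and $\nabla(e^{-2f}) = -2e^{-2f}\nabla f$, so $\nabla h = -e^{-2f}\nabla_V V - |V|^2 e^{-2f}\nabla f$, which is the first formula. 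I record for later the identity $g(\nabla_Y V,V) = -g(\nabla_V V,Y)$, which is just the skew-symmetry of $Y\mapsto \nabla_Y V$ for a Killing field; it makes $dw(Y)=g(\nabla_Y V,V)$ consistent with (\ref{KF1}) and will be the bridge between the two sides of the Hessian identity.

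For the Hessian I would apply the second-order Leibniz rule
\[ \mathrm{Hess}\,h(Y,Y) = w\,\mathrm{Hess}(e^{-2f})(Y,Y) + e^{-2f}\,\mathrm{Hess}\,w(Y,Y) + 2\,d(e^{-2f})(Y)\,dw(Y). \]
Three substitutions then do all of the work: the chain rule gives $\mathrm{Hess}(e^{-2f}) = e^{-2f}\big(4\,df\otimes df - 2\,\mathrm{Hess}\,f\big)$; equation (\ref{KF2}) gives $\mathrm{Hess}\,w(Y,Y) = |\nabla_Y V|^2 - R(Y,V,V,Y)$; and $d(e^{-2f})(Y)\,dw(Y) = -2e^{-2f}df(Y)\,g(\nabla_Y V,V)$. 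Collecting terms presents $\mathrm{Hess}\,h(Y,Y)$ as $e^{-2f}$ times $|\nabla_Y V|^2 - R(Y,V,V,Y) - |V|^2\mathrm{Hess}\,f(Y,Y) + 2|V|^2 df(Y)^2 - 4\,df(Y)\,g(\nabla_Y V,V)$.

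The remaining step, which is the only place any rearrangement is needed, is to recognize this raw expression as the claimed sum. The idea is to complete the square on the $V$-derivative: since $\nabla_Y(e^{-f}V) = e^{-f}(\nabla_Y V - df(Y)V)$, we have $|\nabla_Y(e^{-f}V)|^2 = e^{-2f}\big(|\nabla_Y V|^2 - 2df(Y)g(\nabla_Y V,V) + df(Y)^2|V|^2\big)$, which absorbs the $|\nabla_Y V|^2$ term together with part of the cross and quadratic $df$ terms. Using the gradient formula and the Killing identity $g(\nabla_Y V,V) = -g(\nabla_V V,Y)$ one computes $dh(Y) = e^{-2f}\big(g(\nabla_Y V,V) - |V|^2 df(Y)\big)$, so that $-2\,df\otimes dh(Y,Y) = e^{-2f}\big(-2\,df(Y)g(\nabla_Y V,V) + 2|V|^2 df(Y)^2\big)$ supplies exactly the leftover cross term and the extra quadratic $df$ contribution. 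Finally the pair symmetry $R(Y,V,V,Y) = R(V,Y,Y,V)$ together with the surviving $-e^{-2f}|V|^2\big(\mathrm{Hess}\,f(Y,Y) + df(Y)^2\big)$ matches the last grouped term, completing the identity.

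I expect no genuine obstacle here beyond careful bookkeeping of the many $df(Y)$, $g(\nabla_Y V,V)$, and $|V|^2$ terms. The main subtlety is simply that the target is organized to make the later applications transparent — the curvature, the Bochner-type quantities, and the conformally rescaled derivative $\nabla_Y(e^{-f}V)$ appear explicitly — rather than in the form the Leibniz rule produces directly, so the whole content of the proof is the regrouping governed by the three identities and the Killing relation $g(\nabla_Y V,V) = -g(\nabla_V V,Y)$.
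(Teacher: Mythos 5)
Your proposal is correct and takes essentially the same route as the paper: both compute $\mathrm{Hess}\, h$ by the Leibniz rule, substitute the Killing identities (\ref{KF1}) and (\ref{KF2}), and then regroup the raw expression by completing the square to produce $|\nabla_Y(e^{-f}V)|^2$ and absorbing the leftover cross terms into $-2\,df\otimes dh(Y,Y)$. The only cosmetic difference is that the paper obtains the Hessian by differentiating its formula for the one-form $dh$ (working with $g(\nabla_V V,Y)$), while you apply the product rule for Hessians directly (working with $g(\nabla_Y V,V)$); the two are identical via the skew-symmetry $g(\nabla_Y V,V)=-g(\nabla_V V,Y)$.
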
 
 \begin{proof}
 
For the  first equation,  from the product rule we have  
 \begin{eqnarray*}
 dh = e^{-2f}\left(- |V|^2 df  + d\left( \frac12 |V|^2\right) \right)
 \end{eqnarray*}
 So that
 \begin{eqnarray*}
 dh(Y) = -e^{-2f}\left( g(\nabla_V V, Y) + g(\nabla f, Y)|V|^2 \right)
 \end{eqnarray*}
Differentiating this equation then gives us  
 \begin{eqnarray*} 
 \mathrm{Hess} h = e^{-2f}\left( 2|V|^2 df \otimes df -2  d\left( \frac12 |V|^2\right) \otimes df - 2 df \otimes d\left(  \frac{1}{2} |V|^2\right) - |V|^2 \mathrm{Hess} f + \mathrm{Hess} \left(\frac12 |V|^2\right) \right).
 \end{eqnarray*}
Plugging in (\ref{KF1}) and (\ref{KF2}) gives us 
 \begin{eqnarray*}
 \mathrm{Hess} h(Y,Y) &=&e^{-2f}  \left(  2 |V|^2 g(\nabla f, Y)^2 +  4 g(\nabla f, Y)g(\nabla_V V, Y) +   |\nabla_Y V|^2 \right.  \\
 && \qquad  \left. - R(Y,V,V,Y) -  |V|^2 \mathrm{Hess} f(Y,Y) \right) \\
 &=& |\nabla_Y (e^{-f} V)|^2 - e^{-2f}( R(Y,V,V,Y) +  |V|^2 \mathrm{Hess} f(Y,Y)) \\
 && + e^{-2f} (|V|^2g(\nabla f, Y)^2  + 2 g(\nabla f, Y)g(\nabla_V V, Y)).
 \end{eqnarray*}
 Then we also have
 \begin{eqnarray*}
  df\otimes dh(Y,Y) &=& -e^{-2f} g(\nabla f, Y) \left( g(\nabla_V V, Y) + g(\nabla f, Y)|V|^2 \right)\\
  &=& -e^{-2f}\left(  g(\nabla f, Y) g(\nabla_V V, Y) + g(\nabla f, Y)^2|V|^2\right)
  \end{eqnarray*}
  which tells us that 
  \[  2e^{-2f} g(\nabla f, Y)g(\nabla_V V, Y)  =  -  2df\otimes dh(Y,Y)  - 2e^{-2f}g(\nabla f, Y)^2|V|^2.  \]
  Plugging this in to the original gives   \begin{eqnarray*}
  &&\mathrm{Hess} h(Y,Y) +  2df\otimes dh(Y,Y) = \\
   &&  \quad |\nabla_Y (e^{-f} V)|^2 - e^{-2f} \left(  R(Y,V,V,Y) +  |V|^2 \mathrm{Hess} f(Y,Y) +  |V|^2g(\nabla f, Y)^2 \right)
  \end{eqnarray*}
  \end{proof}
  
  \begin{theorem} Suppose $(M,g)$ is a compact even dimensional manifold, if there is a function $f$ such that $ \overline{\mathrm{sec}}_f > 0$
  then every Killing field has a zero. 
  \end{theorem}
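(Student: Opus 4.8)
The plan is to run the weighted analogue of Berger's classical argument, using the Hessian formula for $h=\tfrac12 e^{-2f}|V|^2$ from the preceding lemma in place of the usual Hessian of $|V|^2$. Suppose, for contradiction, that $V$ is a Killing field with no zeros. Since $M$ is compact and $h>0$ everywhere, $h$ attains a positive minimum at some $p\in M$; there $dh=0$, and because $p$ is a minimum the Hessian $\mathrm{Hess}\,h$ is positive semidefinite. First I would extract the content of $dh=0$: from the gradient formula $\nabla h=-e^{-2f}(\nabla_V V+|V|^2\nabla f)$ the vanishing of $dh$ at $p$ forces $\nabla_V V=-|V|^2\nabla f$. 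Writing $A(Y)=\nabla_Y V$ for the covariant derivative operator, which is skew-symmetric since $V$ is Killing, this reads $A(V)=-|V|^2\nabla f$.

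The heart of the argument is to produce a nonzero $Y\perp V(p)$ with $\nabla_Y(e^{-f}V)=0$ at $p$. I would introduce the $f$-twisted operator $B(Y)=e^{f}\nabla_Y(e^{-f}V)=A(Y)-g(\nabla f,Y)V$. Using skewness of $A$ together with the critical point relation $A(V)=-|V|^2\nabla f$, one computes $g(B(Y),V)=|V|^2 g(\nabla f,Y)-g(\nabla f,Y)|V|^2=0$ for every $Y$, so $B$ maps $T_pM$ into $V(p)^\perp$. Moreover, for $Y,Z\in V(p)^\perp$ the twisting term drops out and $g(B(Y),Z)=g(A(Y),Z)=-g(A(Z),Y)=-g(B(Z),Y)$, so $B$ restricts to a skew-symmetric endomorphism of $V(p)^\perp$. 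As $M$ is even-dimensional, $V(p)^\perp$ has odd dimension $n-1$, and a skew-symmetric operator on an odd-dimensional space is singular; hence there is a nonzero $Y\in V(p)^\perp$ with $B(Y)=0$, i.e. $\nabla_Y(e^{-f}V)=0$.

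Finally I would substitute this $Y$ into the Hessian formula. At $p$ the term $-2\,df\otimes dh$ vanishes because $dh=0$, and $|\nabla_Y(e^{-f}V)|^2=0$ by construction, leaving $\mathrm{Hess}\,h(Y,Y)=-e^{-2f}\overline{R}^{Y}_f(V,V)$. Since $Y$ is perpendicular to $V(p)$ and both are nonzero, the hypothesis $\overline{\sec}_f>0$ gives $\overline{R}^{Y}_f(V,V)>0$, so $\mathrm{Hess}\,h(Y,Y)<0$, contradicting that $p$ is a minimum. Therefore $V$ must vanish somewhere.

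I expect the main obstacle to be the middle step: recognizing that the right replacement for Berger's skew operator $Y\mapsto\nabla_Y V$ is the twisted operator $Y\mapsto\nabla_Y(e^{-f}V)$, and checking that the critical-point identity $\nabla_V V=-|V|^2\nabla f$ is exactly what forces its image into $V(p)^\perp$ and makes it skew there, so that the parity argument applies. Everything else — compactness to secure the minimum, the odd-dimensional kernel argument, and the final sign from $\overline{\sec}_f>0$ — is routine once the lemma's Hessian formula and this operator are in hand.
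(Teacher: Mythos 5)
Your proof is correct and follows essentially the same route as the paper: minimize $h=\tfrac12 e^{-2f}|V|^2$, use the critical-point identity $\nabla_V V=-|V|^2\nabla f$ together with a skew-symmetry/parity argument to produce a nonzero $Y\perp V$ with $\nabla_Y(e^{-f}V)=0$, and then derive a contradiction from the Hessian formula and $\overline{\sec}_f>0$. The only (equivalent) difference is that you restrict the twisted operator to the odd-dimensional space $V^\perp$, whereas the paper extends it to the skew-symmetric operator $A(w)=\nabla_w V+g(w,V)\nabla f-g(w,\nabla f)V$ on all of $T_pM$ and uses that, in even dimensions, a skew operator with $V$ in its kernel must have a second kernel vector perpendicular to $V$.
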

  
  
  \begin{proof} 
  The argument is by contradiction.  If there is a vector field $V$ which does not have a zero then the function  $h$ has a non-zero minimum at a point  $p$.   At $p$,   we then have $dh=0$ which implies from the previous lemma that 
  \[ g(\nabla_V V, Y) = - g(\nabla f, Y)|V|^2 \qquad \forall Y \in T_pM \]
    In particular,   setting $Y=V$ and using the skew-symmetry of $\nabla V$ we obtain  $g(\nabla f, V) = 0$ at $p$. 
    
    Consider the skew symmetric endomorphism on $A : T_pM \rightarrow T_pM $ given by 
   \[ A(w) = \nabla_w V  + g(w, V) \nabla f - g(w, \nabla f) V \]
   Then,  using that $V \perp \nabla f$ at $p$ we can see that $V|_p$ is in the null space of $A$ as 
   \[ A(V|_p) = (\nabla_V V + |V|^2 \nabla f)|_p = \nabla h|_p  = 0 \]
   If the dimension of the manifold is even, then we know that $A$ has another zero eigenvector for $A$ which is perpendicular to $V$, call it $w$.  Then we have  
   \[ 0 = A(w) = \nabla_w V  - g(w, \nabla f) V \]
   Which implies that  
   \[ \nabla_w(e^{-f} V) = e^{-f} A(w) = 0 \]
  Plugging this into the formula for the Hessian of $h$ in the previous lemma gives us 
  \begin{eqnarray*}
  \mathrm{Hess} h(w,w)&=& - e^{-2f} \left(  R(w,V,V,w) +  |V|^2 \mathrm{Hess} f(w,w) +  |V|^2g(\nabla f, w)^2  \right) 
  \end{eqnarray*}
 The assumption  $\overline{\mathrm{sec}}_f > 0$ then shows that $\mathrm{Hess} h (w,w)<0$, which is a contradiction to $p$ being a minimum. 
\end{proof}

\begin{bibdiv}

\begin{biblist}
\bib{BE}{article}{
   author={Bakry, D.},
   author={{\'E}mery, Michel},
   title={Diffusions hypercontractives},
   language={French},
   conference={
      title={S\'eminaire de probabilit\'es, XIX, 1983/84},
   },
   book={
      series={Lecture Notes in Math.},
      volume={1123},
      publisher={Springer},
      place={Berlin},
   },
   date={1985},
   pages={177--206},
}

\bib{Berger1}{article}{
   author={Berger, Marcel},
   title={Sur certaines vari\'et\'es riemanniennes \`a courbure positive},
   language={French},
   journal={C. R. Acad. Sci. Paris},
   volume={247},
   date={1958},
   pages={1165--1168},
}

\bib{Berger2}{article}{
   author={Berger, M.},
   title={Les vari\'et\'es Riemanniennes $(1/4)$-pinc\'ees},
   language={French},
   journal={Ann. Scuola Norm. Sup. Pisa (3)},
   volume={14},
   date={1960},
   pages={161--170},
   review={\MR{0140054 (25 \#3478)}},
}

\bib{BergerKilling}{article}{
   author={Berger, Marcel},
   title={Trois remarques sur les vari\'et\'es riemanniennes \`a courbure
   positive},
   journal={C. R. Acad. Sci. Paris S\'er. A-B},
   volume={263},
   date={1966},
   pages={A76--A78},

}

 \bib{Brinkmann}{article}{
   author={Brinkmann, H. W.},
   title={Einstein spaces which are mapped conformally on each other},
   journal={Math. Ann.},
   volume={94},
   date={1925},
   number={1},
   pages={119--145},
}

\bib{CGY1}{article}{
   author={Chang, Sun-Yung A.},
   author={Gursky, Matthew J.},
   author={Yang, Paul},
   title={Conformal invariants associated to a measure},
   journal={Proc. Natl. Acad. Sci. USA},
   volume={103},
   date={2006},
   number={8},
   pages={2535--2540},
}

\bib{CGY2}{article}{
   author={Chang, Sun-Yung A.},
   author={Gursky, Matthew J.},
   author={Yang, Paul},
   title={Conformal invariants associated to a measure: conformally
   covariant operators},
   journal={Pacific J. Math.},
   volume={253},
   date={2011},
   number={1},
   pages={37--56},
}

\bib{Cetc}{article}{
author={Corwin, I.},
author={Hoffman, N.},
author={Hurder, S.}
author={Sesum, V.}
author={Xu, Y.}
 title={Differential geometry of manifolds with density},
 journal={ Rose-Hulman Und. Math. J.},
 volume={7},
 date={2006}, 
 number={1} 
 note={article 2},
 }

\bib{CM}{article}{
   author={Corwin, Ivan},
   author={Morgan, Frank},
   title={The Gauss-Bonnet formula on surfaces with densities},
   journal={Involve},
   volume={4},
   date={2011},
   number={2},
   pages={199--202},
   }
   
   \bib{CS}{article}{
   author={Croke, Christopher B.},
   author={Schroeder, Viktor},
   title={The fundamental group of compact manifolds without conjugate
   points},
   journal={Comment. Math. Helv.},
   volume={61},
   date={1986},
   number={1},
   pages={161--175},}

\bib{doC}{book}{
author={do Carmo, Manfredo Perdig{\~a}o},
   title={Riemannian geometry},
   series={Mathematics: Theory \& Applications},
   note={Translated from the second Portuguese edition by Francis Flaherty},
   publisher={Birkh\"auser Boston Inc.},
   place={Boston, MA},
   date={1992},
}



\bib{Hamilton}{article}{
   author={Hamilton, Richard S.},
   title={The Ricci flow on surfaces},
   conference={
      title={Mathematics and general relativity},
      address={Santa Cruz, CA},
      date={1986},
   },
   book={
      series={Contemp. Math.},
      volume={71},
      publisher={Amer. Math. Soc., Providence, RI},
   },
   date={1988},
   pages={237--262},
   }

\bib{JW}{article}{
author={Jauregui, Jeffrey L.},
author={Wylie, William},
title={Conformal diffeomorphisms of gradient Ricci solitons and generalized quasi-Einstein Manifolds},
journal={J. Geom. Anal.},
volume={25}, 
number={1},
date={2015},
pages={668-708}
}

\bib{KolMil}{article}{
author ={Kolesnikov, Alexander V.},
author ={Milman, Emanuel},
title={Poincar\'e and Brunn-Minkowski inequalities on weighted Riemannian manifolds with boundary},
note={ arXiv:1310.2526}
}

\bib{KennardWylie}{article}{
author={Kennard, Lee},
author={Wylie, William},
title={Positive weighted sectional curvature},
note={arXiv:1410.1558}
}
\bib{Kling61}{article}{
   author={Klingenberg, Wilhelm},
   title={\"Uber Riemannsche Mannigfaltigkeiten mit positiver Kr\"ummung},
   language={German},
   journal={Comment. Math. Helv.},
   volume={35},
   date={1961},
   pages={47--54},
}

\bib{Kling}{book}{
   author={Klingenberg, Wilhelm},
   title={Riemannian geometry},
   series={de Gruyter Studies in Mathematics},
   volume={1},
   publisher={Walter de Gruyter \& Co.},
   place={Berlin},
   date={1982},

}

  \bib{KN}{book}{
   author={Kobayashi, Shoshichi},
   author={Nomizu, Katsumi},
   title={Foundations of differential geometry. Vol. I},
   series={Wiley Classics Library},
   note={Reprint of the 1963 original},
   publisher={John Wiley \& Sons Inc.},
   place={New York},
   date={1996},
}


\bib{KR}{article}{
   author={K{\"u}hnel, Wolfgang},
   author={Rademacher, Hans-Bert},
   title={Einstein spaces with a conformal group},
   journal={Results Math.},
   volume={56},
   date={2009},
   number={1-4},
   pages={421--444}
}
  
\bib{Lich1}{article}{
   author={Lichnerowicz, Andr{\'e}},
   title={Vari\'et\'es riemanniennes \`a tenseur C non n\'egatif},
   language={French},
   journal={C. R. Acad. Sci. Paris S\'er. A-B},
   volume={271},
   date={1970},
   pages={A650--A653},
 }
 
 \bib{Lich2}{article}{
   author={Lichnerowicz, Andr{\'e}},
   title={Vari\'et\'es k\"ahl\'eriennes \`a premi\`ere classe de Chern non
   negative et vari\'et\'es riemanniennes \`a courbure de Ricci
   g\'en\'eralis\'ee non negative},
   language={French},
   journal={J. Differential Geometry},
   volume={6},
   date={1971/72},
   pages={47--94},
   }
   
   \bib{Lott}{article}{
   author={Lott, John},
   title={Some geometric properties of the Bakry-\'Emery-Ricci tensor},
   journal={Comment. Math. Helv.},
   volume={78},
   date={2003},
   number={4},
   pages={865--883}}
   
\bib{Lott2}{article}{
 author={Lott, John},
   title={Remark about scalar curvature and Riemannian submersions},
   journal={Proc. Amer. Math. Soc.},
   volume={135},
   date={2007},
   number={10},
   pages={3375--3381},

}


\bib{Mil}{article}{
author ={Milman, Emanuel},
title={Beyond traditional Curvature-Dimension I: new model spaces for isoperimetric and concentration inequalities in negative dimension},
note={arXiv:1409.4109}
}
   
\bib{MW}{article}{
      author={Munteanu, Ovidiu},
   author={Wang, Jiaping},
   title={Analysis of weighted Laplacian and applications to Ricci solitons},
   journal={Comm. Anal. Geom.},
   volume={20},
   date={2012},
   number={1},
   pages={55--94}
}

\bib{Morgan}{article}{
   author={Morgan, Frank},
   title={Manifolds with density},
   journal={Notices Amer. Math. Soc.},
   volume={52},
   date={2005},
   number={8},
   pages={853--858},
}  

\bib{Morgan2}{article}{
   author={Morgan, Frank},
   title={Myers' theorem with density},
   journal={Kodai Math. J.},
   volume={29},
   date={2006},
   number={3},
   pages={455--461},
}  

\bib{MorganBook}{book}{
   author={Morgan, Frank},
   title={Geometric measure theory},
   edition={4},
   publisher={Elsevier/Academic Press, Amsterdam},
   date={2009},
   pages={viii+249},
}

\bib{Morgan3}{article}{
   author={Morgan, Frank},
   title={Manifolds with density and Perelman's proof of the Poincar\'e
   conjecture},
   journal={Amer. Math. Monthly},
   volume={116},
   date={2009},
   number={2},
   pages={134--142}
}
\bib{Ohta}{article}{
author={Ohta, Shin-ichi}
title={ (K,N)-convexity and the curvature-dimension condition for negative N}
note={arXiv:1310.7993}
}


\bib{OS}{article}{
   author={Osgood, Brad},
   author={Stowe, Dennis},
   title={The Schwarzian derivative and conformal mapping of Riemannian
   manifolds},
   journal={Duke Math. J.},
   volume={67},
   date={1992},
   number={1},
   pages={57--99}
}

\bib{Per}{article}{
author= {Perelman, G. }
title={The entropy formula for the {R}icci flow and its geometric
  applications.}
note={arXiv: math.DG/0211159.}
}

\bib{Petersen}{book}{
   author={Petersen, Peter},
   title={Riemannian geometry},
   series={Graduate Texts in Mathematics},
   volume={171},
   edition={2},
   publisher={Springer},
   place={New York},
   date={2006},
}


\bib{Qian}{article}{
   author={Qian, Zhongmin},
   title={Estimates for weighted volumes and applications},
   journal={Quart. J. Math. Oxford Ser. (2)},
   volume={48},
   date={1997},
   number={190},
   pages={235--242},
}


   
%

\bib{Synge}{article}{
author = {Synge, J.L.}
title = {On the connectivity of spaces of positive curvature}
journal = {Quart. J. Math}
volume = {7}
date = {1936}
pages = {316-320}
}

\bib{Tashiro}{article}{
   author={Tashiro, Yoshihiro},
   title={Complete Riemannian manifolds and some vector fields},
   journal={Trans. Amer. Math. Soc.},
   volume={117},
   date={1965},
   pages={251--275}
}

\bib{WW}{article}{
   author={Wei, Guofang},
   author={Wylie, Will},
   title={Comparison geometry for the Bakry-Emery Ricci tensor},
   journal={J. Differential Geom.},
   volume={83},
   date={2009},
   number={2},
   pages={377--405},}     
   
\bib{Wylie}{article}{
author={Wylie, William}
title={Some curvature pinching results for Riemannian manifolds with density}
note={In preparation}
}

\end{biblist}
\end{bibdiv}

\end{document}